\documentclass[12pt]{amsart}
\usepackage[colorlinks=true,linkcolor=blue]{hyperref}
\usepackage{amsmath}
\usepackage{amssymb}
\usepackage{amsthm}
\usepackage{graphicx}


\setlength{\oddsidemargin}{0.20truein}
\setlength{\evensidemargin}{0.20truein}

\setlength{\textwidth}{6.2truein}
\setlength{\textheight}{8.95truein}

\setlength{\voffset}{-0.7truein}


\theoremstyle{plain} 
\newtheorem{theorem}{Theorem}[section]
\newtheorem{proposition}[theorem]{Proposition}
\newtheorem{lemma}[theorem]{Lemma}

\newtheorem{conjecture}[theorem]{Conjecture}

\theoremstyle{remark}

\theoremstyle{definition}

\DeclareMathOperator{\Gal}{Gal}

\DeclareMathOperator{\charact}{char}

\DeclareMathOperator{\Aut}{Aut}

\newcommand{\fp}{ {\mathfrak p} }

\newcommand{\fq}{ {\mathfrak q} }

\newcommand{\fP}{ {\mathfrak P} }
\newcommand{\fQ}{ {\mathfrak Q} }

\newcommand{\cO}{ {\mathcal O} }

\newcommand{\bQ} { {\mathbb Q}}

\newcommand{\QQ} { {\mathbb Q}} 
\newcommand{\ZZ} { {\mathbb Z}}

\usepackage{tikz}

\begin{document}

\title{Odoni's conjecture for number fields}

\author[Benedetto]{Robert L. Benedetto}
\address[Benedetto]{Amherst College \\ Amherst, MA}
\email[Benedetto]{rlbenedetto@amherst.edu}

\author[Juul]{Jamie Juul}
\address[Juul]{Amherst College \\ Amherst, MA}
\email[Juul]{jamie.l.rahr@gmail.com}

\begin{abstract}
Let $K$ be a number field, and let $d\geq 2$.
A conjecture of Odoni (stated more generally for characteristic zero Hilbertian fields $K$) posits
that there is a monic polynomial $f\in K[x]$ of degree $d$, and a point $x_0\in K$,
such that for every $n\geq 0$, the so-called arboreal Galois group $\Gal(K(f^{-n}(x_0))/K)$ is an
$n$-fold wreath product of the symmetric group $S_d$.
In this paper, we prove Odoni's conjecture when $d$ is even and $K$ is an arbitrary
number field, and also when both $d$ and $[K:\bQ]$ are odd.
\end{abstract}


\subjclass[2010]{37P05, 11G50, 14G25}

\maketitle

\section{Introduction}

Let $F$ be a field, let $f(x)\in F[x]$ be a polynomial of degree $d\geq 2$, and let $x_0\in F$.
For each $n\geq 0$,
denote by $f^n$ the $n$-iterate $f \circ f\circ \dots \circ f$ of $f$,
and consider the set $f^{-n}(x_0)=\{\alpha\in \bar{F}\;|\; f^n(\alpha)=x_0\}$
of $n$-th preimages of $x_0$. If $f^n-x_0$ is separable for all $n$,
then $f^{-n}(x_0)$ has exactly $d^n$ elements for each $n$,
and $F(f^{-n}(x_0))$ is a Galois extension of $F$.

In \cite{odoni}, Odoni showed that $\Gal(F(f^{-n}(x_0))/F)$
is isomorphic to a subgroup of $[S_d]^n$,
the $n$-fold wreath product of the symmetric group $S_d$ with itself.
He also showed that if $\charact F=0$ and $E = F(s_{d-1}, \dots, s_0)$,
then the generic monic polynomial
$G(x)=x^d+s_{d-1}x^{d-1}+\dots+s_0\in E[x]$
defined over the function field $E$ satisfies $\Gal(E(G^{-n}(0))/E)\cong [S_d]^n$.
In \cite{JO}, the second author showed that this result also holds for fields of characteristic $p$,
except in the case $p=d=2$. It follows from Hilbert's Irreducibility Theorem that if $F=\bQ$, or more generally if $F$ is any Hilbertian field,
then for any fixed $n\geq 0$, there are infinitely many polynomials $f(x)\in F(x)$
for which $\Gal(F(f^{-n}(x_0))/F)\cong [S_d]^n$.
However, it does not follow immediately that there are any polynomials $f(x)\in F[x]$
for which this isomorphism holds for \emph{all} $n\geq 0$.
Based on his results, Odoni proposed the following conjecture.

\begin{conjecture}[Odoni, Conjecture 7.5 \cite{odoni}]
\label{conj:odoni}
For any Hilbertian field $F$ of characteristic $0$ and any $d\geq 2$,
there is a monic polynomial $f(x)\in F[x]$ of degree $d$ such that
$\Gal(F(f^{-n}(0))/F)\cong [S_d]^n$ for all $n\geq 0$.
\end{conjecture}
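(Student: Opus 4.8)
The plan is to prove the conjecture for $K$ a number field in the two regimes of the abstract — $d$ even with $K$ arbitrary, and $d$ and $[K:\QQ]$ both odd. It suffices to exhibit one monic $f\in K[x]$ of degree $d$ for which $f^n$ is separable and $\Gal(K_n/K)=[S_d]^n$ for all $n$, where $K_n:=K(f^{-n}(0))$. Since $K_{n-1}\subseteq K_n$ and $\Gal(K_n/K)$ sits inside $S_d\wr\Gal(K_{n-1}/K)$, surjecting onto the second factor, a Goursat-type analysis of this wreath product shows that, assuming level $n-1$ is full, level $n$ is full if and only if two things hold over $K_{n-1}$: (i) for every $\beta\in f^{-(n-1)}(0)$ the polynomial $f(x)-\beta$ has Galois group $S_d$ over $K_{n-1}$ (in particular is irreducible), and (ii) the classes $[\operatorname{disc}(f(x)-\beta)]$, for $\beta\in f^{-(n-1)}(0)$, are $\F_2$-linearly independent in $K_{n-1}^\times/(K_{n-1}^\times)^2$. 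Because $K_{n-1}^\times/(K_{n-1}^\times)^2$ already records the square classes from the lower levels, (ii) says precisely that no product of a nonempty subset of these discriminants, possibly multiplied by discriminants from earlier levels, is a square in $K_{n-1}$. Everything thus reduces to building a single $f$ for which (i) and (ii) hold at every level at once.

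To control irreducibility uniformly in $n$, I would fix a prime $\mathfrak p_0$ of $K$ with $\mathfrak p_0\nmid d$ and impose the local conditions that $f\equiv x^d\pmod{\mathfrak p_0}$, that the constant term of $f$ have $\mathfrak p_0$-adic valuation exactly $1$, and that every critical point of $f$ lie in $\mathfrak p_0$. Then $f^n$ is Eisenstein at $\mathfrak p_0$ for every $n$, hence irreducible over $K$ and separable, so the tree is transitive at every level; being tamely and totally ramified at $\mathfrak p_0$, the splitting field $K_{n-1}$ has ramification index exactly $d^{n-1}$ over $\mathfrak p_0$, and inertia contributes a $d^n$-cycle whose $d^{n-1}$-st power is a product of one $d$-cycle on the children of each $\beta$; thus each group in (i) is at least a transitive subgroup of $S_d$ containing a $d$-cycle. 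The choice of critical points also forces $v_{\mathfrak p_0}\big(f^m(\gamma_j)\big)=1$ for all $m\ge 1$ and all critical points $\gamma_j$, and the resultant identities $\operatorname{disc}(f(x)-\beta)=\pm\, d^{d}\prod_j\big(f(\gamma_j)-\beta\big)$ and $\prod_{\beta\in f^{-m}(0)}\big(f(\gamma_j)-\beta\big)=f^{m+1}(\gamma_j)$ express every discriminant, and every subset-product of discriminants, in terms of the critical-orbit quantities $f^m(\gamma_j)$, whose $\mathfrak p_0$-adic valuations and residues are now under control. When $d$ is even one exploits that $d-1$ is odd to conclude that each individual $\operatorname{disc}(f(x)-\beta)$ has odd $\mathfrak p_0$-adic valuation in $K_{n-1}$ — hence is a non-square, which gives the $\Gal\not\subseteq A_d$ part of (i) and the size-one instances of (ii) — while the extraneous factor $d^{d}$ is a square and the global sign is easily tracked. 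To upgrade the transitive-with-a-$d$-cycle group in (i) to all of $S_d$ I would impose one or two further splitting conditions at auxiliary primes of good reduction, producing via their decomposition groups a transposition together with a cycle of prime length exceeding $d/2$, which forces primitivity and hence, by Jordan's theorem, all of $S_d$; any finitely many remaining low-level conditions can instead be met by Hilbert's Irreducibility Theorem, using Odoni's theorem that the generic degree-$d$ polynomial has the full arboreal group.

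The main obstacle is the simultaneous statement (ii): making a single discriminant, or the full product at one level, a non-square is routine, but ruling out every nontrivial relation — among the $d^{n-1}$ classes at level $n$, in every way the tree and wreath-product structure allow subsets to cohere, and against the inherited classes — uniformly in $n$ is the heart of the matter, and it is exactly here that the parity hypothesis is indispensable. For $d$ even I would carry this out by tracking at $\mathfrak p_0$ not merely the valuations but also the residue classes of the quantities $f^m(\gamma_j)$: the relevant subset-products turn out to be non-squares in $K_{n-1}$ because $d-1$ is odd (so odd powers detect squareness of their base) and $d$ is even (so $d$-th powers are automatically squares, which absorbs the ambiguities created by the totally ramified tower), and this kills the block-type relations one internal node at a time. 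When $d$ is odd these parity arguments collapse — both $d-1$ and $d^n-1$ are even — so instead I would construct $f$ over $\QQ$ with its critical orbit confined to a prescribed, manifestly $\F_2$-independent set of square classes of $\QQ$, verify (i) and (ii) over $\QQ$, and transport the conclusion to $K$ via the injection $\QQ^\times/(\QQ^\times)^2\hookrightarrow K^\times/(K^\times)^2$, which holds because $[K:\QQ]$ odd forces $K$ to have no quadratic subfield. The delicate work throughout is the valuation-and-linear-algebra bookkeeping that converts ``each relevant product is a non-square'' into the level-by-level independence of (ii), together with the verification that the reduction of (ii) to the finite package of data carried by the critical orbit is genuinely reversible.
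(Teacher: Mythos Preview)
Your reduction to conditions (i) and (ii) is the right framework, but the plan for verifying (ii) at the single Eisenstein prime $\mathfrak p_0$ cannot work. Because $\mathfrak p_0$ is totally and tamely ramified in $K_{n-1}$, there is a unique prime $\mathfrak P_0$ above it, the residue field does not grow, and every $\sigma\in\Gal(K_{n-1}/K)$ acts trivially on local square classes at $\mathfrak P_0$. Since $\sigma$ permutes the $\operatorname{disc}(f(x)-\beta)$ among themselves, all $d^{n-1}$ of these discriminants have the \emph{same} image in $K_{n-1,\mathfrak P_0}^\times/(K_{n-1,\mathfrak P_0}^\times)^2\cong(\mathbb Z/2)^2$. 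Hence the local test at $\mathfrak p_0$ sees only the parity of $|S|$; it says nothing about even--cardinality subset products, and in particular cannot separate the $d^{n-1}$ classes once $n\ge 2$. Your parity observation that $v_{\mathfrak P_0}(\operatorname{disc}_i)=d-1$ is correct and does rule out odd $|S|$, but the appeal to ``residue classes'' and ``block-type relations'' does not add information beyond this single bit. The same objection undermines your route to (i): the Eisenstein prime supplies a $d$-cycle, but the transposition you need in every $\Gal(M_i/K_{n-1})$ has to come from somewhere, and Hilbert irreducibility only handles finitely many levels.

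The paper sidesteps (ii) entirely. Rather than proving $\mathbb F_2$--independence of the $d^{n-1}$ discriminant classes, it arranges that for each $n$ there is a \emph{new} prime $\fp$ (depending on $n$) dividing $\Delta(f^n-x_0)$ to odd order and no lower $\Delta(f^\ell-x_0)$. For any fixed leaf $\alpha_i$, a prime $\fP\mid\fp$ of $K(\alpha_i)$ then ramifies in $M_i$ but is shown to be unramified in the compositum $\widehat{M_i}$ of the other $M_j$; thus $\Gal(K_n/\widehat{M_i})$ already contains a transposition and, being normal in $\Gal(M_i/K(\alpha_i))\cong S_d$, must be all of $S_d$. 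The single-factor isomorphism $\Gal(M_i/K(\alpha_i))\cong S_d$ itself is obtained not from a $d$-cycle plus a long prime cycle, but from the trinomial shape $f(x)=x^d-bx^m$ with $d/2<m<d$: a second fixed prime $\fp_2$ gives, via a two-segment Newton polygon, an inertia subgroup transitive on $m$ roots and trivial on the remaining $d-m$, which together with transitivity and a transposition forces $S_d$ by an elementary lemma. Producing the level-by-level new prime $\fp$ is the technical heart of the argument and is where the parity hypotheses on $d$ (and, for odd $d$, on whether $d$ or $d-2$ is a square in $K$) actually enter, via an explicit integer $F_n$ built from the critical orbit that is shown not to be a unit times a square. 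Your odd-$d$ plan of constructing $f$ over $\bQ$ and transporting via $[K:\bQ]$ odd is also weaker than what the paper achieves (any $K$ in which $d$ and $d-2$ are not both squares) and leaves the hard construction unspecified.
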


For any point $x_0\in F$, if we set $g(x)=f(x+x_0)-x_0\in F[x]$,
then the fields $F(f^{-n}(x_0))$ and $F(g^{-n}(0))$ coincide. Thus, it
is equivalent to phrase Odoni's conjecture in terms of the preimages $f^{-n}(x_0)$
of an arbitrary $F$-rational point $x_0$ instead of $0$.

The Galois groups $\Gal(F(f^{-n}(x_0)/F)$ can be better understood through the framework of arboreal Galois representations \cite{BJ}. It is not hard to see that $[S_d]^n\cong \Aut(T_{d,n})$ where $T_{d,n}$ is a $d$-ary rooted tree with $n$ levels.
We define an embedding $\Gal(F(f^{-n}(x_0))/F)\rightarrow \Aut(T_{d,n})$
by assigning each element of $\bigsqcup_{i=1}^n f^{-i}(x_0)$ to a vertex of the tree as follows:
$x_0$ is the root of the tree, and the points of $f^{-i}(x_0)$ are the vertices at the $i$-th
level of the tree, with an edge connecting $\alpha\in f^{-i}(x_0)$ to $\beta\in f^{-i-1}(x_0)$
if $f(\beta)=\alpha$.
		
Jones \cite{Jones3} stated a version Odoni's Conjecture in the case that $F=\QQ$
by further specifying that $f$ should have coefficients in $\ZZ$. In this paper, however, we consider
the original version of Conjecture~\ref{conj:odoni},
where $f$ is allowed to have non-integral coefficients.
		
Conjecture~\ref{conj:odoni} has already been proven in many cases.
Odoni himself proved that $\Gal(\bQ(f^{-n}(0))/\bQ)\cong [S_2]^n$ for all $n\geq 0$
when $f(x)=x^2-x+1$, proving the conjecture for  $F=\bQ$ and $d=2$ \cite{odoni2}.
Stoll \cite{Stoll} produced infinitely many such polynomials for $F=\bQ$ and $d=2$.
In 2017, Looper showed that Odoni's conjecture holds for $F=\bQ$ and $d=p$ a prime \cite{Looper}. 

In this paper, we prove Odoni's conjecture for even $d\geq 2$ over any number field,
as well as for odd $d\geq 3$ over any number field $K$ not containing $\bQ(\sqrt{d},\sqrt{d-2})$.
(In particular, we prove the conjecture when $d$ and $[K:\bQ]$ are both odd.)

\begin{theorem}\label{thm:maintheorem}
Let $d\geq 2$, and let $K$ be a number field. Suppose either that $d$ is even or that
$d$ and $d-2$ are not both squares in $K$.
Then there is a monic polynomial $f(x)\in K[x]$ of degree $d$
and a rational point $x_0\in K$ such that for all $n\geq 0$,
\[ \Gal \Big( K\big( f^{-n}(x_0) \big) / K \Big) \cong [S_d]^n .\]
\end{theorem}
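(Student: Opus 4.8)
The plan is to reduce the theorem to a maximality criterion that can be checked one level of the tree at a time, and then to construct $f$ and $x_0$ satisfying it by a local recipe at a single well-chosen prime of $K$. Throughout, for a field $L$ I write $a\doteq b$ to mean $a/b\in(L^*)^2$.

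\emph{The reduction.} Write $K_m:=K\big(f^{-m}(x_0)\big)$, so $\Gal(K_m/K)$ embeds in $\Aut(T_{d,m})=[S_d]^m$, and recall that restriction $\Aut(T_{d,m})\to\Aut(T_{d,m-1})$ has kernel $S_d^{d^{m-1}}$, one copy of $S_d$ for each vertex at level $m-1$. By induction on $m$ it suffices to show that if $\Gal(K_{m-1}/K)=[S_d]^{m-1}$ then $\Gal(K_m/K_{m-1})=S_d^{d^{m-1}}$. Since $[S_d]^{m-1}$ is transitive on the $d^{m-1}$ leaves at level $m-1$, a Goursat-type argument on subdirect products reduces this to: (i) every iterate $f^m(x)-x_0$ is irreducible over $K$; and (ii) an explicit class $\delta_m\in K^*/(K^*)^2$ is nontrivial and lies outside the subgroup generated by $\delta_1,\dots,\delta_{m-1}$, where $\delta_m$ is the class of $\operatorname{Res}\!\big(f^m(x)-x_0,\,(f^m)'(x)\big)$ divided by the corresponding resultant at level $m-1$. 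Expanding with the chain rule, $\delta_m\doteq\prod_{j=1}^{d-1}\big(x_0-f^m(\gamma_j)\big)$, with $\gamma_1,\dots,\gamma_{d-1}$ the critical points of $f$. Granting the inductive hypothesis, the maximal elementary abelian $2$-extension of $K$ inside $K_{m-1}$ is $K\big(\sqrt{\delta_1},\dots,\sqrt{\delta_{m-1}}\big)$, so requirement (ii), taken over all $m$, says exactly that $\delta_1,\delta_2,\dots$ are $\F_2$-linearly independent in $K^*/(K^*)^2$.

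\emph{The construction.} I would fix a prime $\mathfrak p$ of $K$ above an odd rational prime $p\nmid d(d-1)(d-2)$ and choose $f$ (monic of degree $d$) and $x_0$ so that, after translating by a suitably $\mathfrak p$-adically small root, $f^m(x)-x_0$ is $\mathfrak p$-Eisenstein for every $m\geq1$: this is a Newton-polygon computation propagating Eisenstein-ness through iteration, in the spirit of Looper's treatment of the prime-degree case, and it yields (i). The same local data determines $v_{\mathfrak p}(\delta_m)$ for all $m$; one arranges it to be odd, with the prime-to-$\mathfrak p$ part of each $\delta_m$ controlled (for instance by a congruence at a second auxiliary prime) so that no nonempty product of the $\delta_m$ is a square, which gives (ii). To pin the Galois group of each fiber polynomial $f(x)-\beta$ to all of $S_d$ rather than a proper transitive subgroup, I would also build in a tamely totally ramified prime along the critical orbit, producing a $(d-1)$-cycle and hence $2$-transitivity; combined with the odd permutation supplied by $\delta_m\notin(K_{m-1}^*)^2$, this forces $S_d$. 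For even $d$ there is enough slack in these choices to run the construction over an arbitrary number field.

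\emph{The main obstacle.} The crux is carrying out (i) and (ii) at once: the $\mathfrak p$-adic rigidity that makes every iterate irreducible also rigidifies the classes $\delta_m$, so one must check by an explicit valuation computation that they genuinely have the parity and growth behavior ruling out a square product --- exactly what fails for, e.g., perturbations of Chebyshev polynomials, whose discriminant classes all coincide. The second subtlety, and the reason for the hypothesis, is an unavoidable feature of odd degree: since $\operatorname{disc}(f-x_0)\doteq d^d\prod_{j}\big(x_0-f(\gamma_j)\big)$ and $d^d\doteq d$ when $d$ is odd, one of the classes $\delta_m$ is forced --- modulo squares and the free parameters of the construction --- to be $d$ or $d-2$, the value $d-2$ entering as the discriminant, up to squares, of the equation cut out by the critical points on the escaping part of the orbit. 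That class is nontrivial precisely when the relevant one of $d$, $d-2$ is a non-square in $K$, so the argument succeeds provided $d$ and $d-2$ are not both squares in $K$. This holds automatically when $[K:\bQ]$ is odd, since then $K$ has no quadratic subfield, so any square in $K$ is a square in $\bQ$, and $d$ and $d-2$ are never both squares in $\bQ$.
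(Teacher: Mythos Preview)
Your plan has a real gap at the step ``$2$-transitive plus an odd permutation forces $S_d$'': this is false. For $d=5$, the affine group $\mathrm{AGL}_1(\F_5)=C_5\rtimes C_4\subsetneq S_5$ is sharply $2$-transitive on five points and contains the $4$-cycle $x\mapsto 2x$, which is already a $(d-1)$-cycle and an odd permutation. For $d=6$, $\mathrm{PGL}_2(\F_5)$ acting on $\bP^1(\F_5)$ is $3$-transitive, contains $5$-cycles and odd $4$-cycles, and is a proper subgroup of $S_6$. What you actually need is a \emph{transposition}, and your class $\delta_m$ --- being the norm from level $m-1$ down to $K$ of the fiber discriminants --- only controls the image of $\Gal(K_m/K_{m-1})$ in $\prod_i S_d/A_d$; it does not hand you a transposition in any individual fiber $\Gal\big(K(f^{-1}(\alpha_i))/K(\alpha_i)\big)$. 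So neither your criterion (i)$+$(ii) nor the auxiliary $(d-1)$-cycle argument suffices to pin each fiber group to $S_d$, and without that the Goursat step does not go through.

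The paper closes this gap by a different mechanism. Working with the specific trinomial $f(x)=x^d-bx^m$ (with $m=d-1$ for $d$ even, $m=d-2$ for $d$ odd), it arranges for each $n$ a \emph{new} prime $\fp$ dividing $\Delta(f^n-x_0)$ to odd order but no earlier discriminant; because a trinomial reduced modulo such a prime can have at most one multiple root, and that root is double (Lemma~\ref{lemma.transp}), the inertia above $\fp$ is generated by an honest transposition, and since $\fp$ is new at level $n$ this transposition lives in exactly one fiber and acts trivially on the others (Proposition~\ref{prop:Gtrans}). The remaining ingredient for $S_d$ is not a $(d-1)$-cycle but a second auxiliary prime $\fp_2$ at which the Newton polygon of $f-\alpha$ breaks into segments of lengths $m$ and $d-m$, yielding an inertia subgroup transitive on $m$ roots and trivial on the rest (Lemma~\ref{lemma.minertia}); Lemma~\ref{lem:genSd} then shows that a transitive subgroup of $S_d$ containing a transposition together with such an $m$-block subgroup, with $m>d/2$ and $\gcd(m,d)=1$, must be all of $S_d$. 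Finally, the hypothesis on $d$ and $d-2$ does not enter as the square class of some $\delta_m$: it is what allows one to show, via an explicit congruence at a Chebotarev-selected prime, that the integer $F_n$ of Lemma~\ref{lem:newpmonicodd} is never a unit times a square, which is precisely what produces the new odd-order prime at every level.
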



The proof, which builds on Looper's techniques, proceeds by induction on $n$,
and involves studying the primes ramifying in $K(f^{-n}(x_0))$. In particular,
to help generate the full group $S_d$ when $d$ is not necessarily prime,
we introduce a positive integer $m<d$ and an auxiliary prime $K$ that ramifies
to degree $m^n$ in $K(\alpha)$, for any $\alpha\in f^{-n}(x_0)$.

Recently, Borys Kadets \cite{Kadets} and Joel Specter \cite{Specter}
have announced proofs of similar theorems, and using similar extensions of Looper's techniques.
Kadets proves Odoni's conjecture over $\bQ$ for polynomials of even degree $d\geq 20$.
Specter proves Odoni's conjecture for algebraic extensions of $\bQ$ which are unramified outside of infinitely many primes.
Our work, which we announced at the 2018 Joint Math Meetings in San Diego
(see \texttt{https://rlbenedetto.people.amherst.edu/talks/sandiego18.pdf}),
was done simultaneously and independently from these projects. 

The outline of the paper is as follows. In Section \ref{section:discriminant} we give preliminary results on discriminant formulas and ramification, as well as a useful group theory lemma,
Lemma~\ref{lem:genSd}.
In Section~\ref{section:sufficientconditions} we prove sufficient conditions for
$\Gal(K(f^{-n}(x_0))/K)$ to be isomorphic to $[S_d]^n$.
Finaly, we prove Theorem \ref{thm:maintheorem} for even $d\geq 2$
in Section~\ref{section:even}, and for odd $d\geq 3$ in Section~\ref{section:odd}.

\section{Ramification and the Discriminant}\label{section:discriminant}

We begin with the following result on the discriminant of a field generated
by a root of a trinomial.

\begin{lemma}
\label{lemma.disc}
Let $K$ be a number field with ring of integers $\cO_K$, let $d>m\geq 1$ with $(m,d)=1$,
let $A,B,C\in K$ with $A\neq 0$, and let
$S$ be a finite set of primes of $\cO_K$ including all archimedean primes and
all primes at which any of $A,B,C$ have negative valuation.

Suppose that
$g(x) = Ax^d+Bx^{m}+C\in\cO_{K,S}[x]$ is irreducible over $K$.
Let $L=K(\theta)$, where $\theta$ is a root of $g(x)$. 
Then the discriminant $\Delta(\cO_{L,S}/\cO_{K,S})$ satisfies
$\Delta(g)=k^2\Delta(\cO_{L,S}/\cO_{K, S})$, where $k\in \cO_{K,S}$, and
\[\Delta(g)=(-1)^{d(d-1)/2}A^{d-m-1}C^{m-1}
\left[(-1)^{d-1} m^m (d-m)^{d-m} B^d + d^d A^{m} C^{d-m}\right] \] 
is the discriminant of the polynomial $g$.
\end{lemma}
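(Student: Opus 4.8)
The statement bundles two essentially independent claims: the closed form for the polynomial discriminant $\Delta(g)$, and the assertion that $\Delta(g)$ and the relative discriminant $\Delta(\cO_{L,S}/\cO_{K,S})$ differ by the square of an element of $\cO_{K,S}$. I would prove them separately, doing the closed form first.

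For the formula, the plan is to compute $\Delta(g)$ from the resultant identity $\Delta(g) = (-1)^{d(d-1)/2}A^{-1}\res(g,g')$. Since $g'(x) = dAx^{d-1}+mBx^{m-1} = x^{m-1}\bigl(dAx^{d-m}+mB\bigr)$, multiplicativity of the resultant in its second argument gives $\res(g,g') = \res(g,x^{m-1})\cdot\res\bigl(g,\,dAx^{d-m}+mB\bigr)$. The first factor is $A^{m-1}\prod_i\theta_i^{m-1} = (-1)^{d(m-1)}C^{m-1}$, using that the product of the roots of $g$ is $(-1)^dC/A$. For the second factor I would write $\res(g,h) = (-1)^{d(d-m)}(dA)^d\prod_{h(\beta)=0}g(\beta)$ with $h(x)=dAx^{d-m}+mB$, then observe that at a root $\beta$ of $h$ one has $\beta^{d-m} = -mB/(dA)$, which collapses $g(\beta)=A\beta^d+B\beta^m+C$ to $g(\beta) = \tfrac{(d-m)B}{d}\beta^m + C$. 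The roots of $h$ are $\beta_0\zeta$ with $\zeta$ ranging over the $(d-m)$-th roots of unity; since $(m,d)=1$ forces $(m,d-m)=1$, a fixed scalar multiple of $\beta^m$ ranges over the same set as $\beta$, so $\prod_{h(\beta)=0}\bigl(\tfrac{(d-m)B}{d}\beta^m+C\bigr)$ evaluates by the elementary identity $\prod_{\zeta^N=1}(a\zeta+C)=C^N-(-a)^N$. Re-inserting $\beta_0^{d-m}=-mB/(dA)$ and simplifying yields $\res(g,h) = \pm A^{d-m}\bigl[\,d^dA^mC^{d-m}+(-1)^{d-1}m^m(d-m)^{d-m}B^d\,\bigr]$. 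Combining the two factors with the prefactor, and collecting signs — the exponents $d(m-1)$ and $d(d-m)$ add up to the even integer $d(d-1)$, which is exactly what leaves the clean $(-1)^{d(d-1)/2}$ out front — produces the claimed expression.

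For the second claim, I would exhibit an explicit full $\cO_{K,S}$-lattice in $\cO_{L,S}$ whose trace-form discriminant is precisely $\Delta(g)$. By the classical construction for a non-monic polynomial, the elements $\nu_0=1$ and $\nu_j = A\theta^j + (\text{lower-degree terms in }\theta\text{ with coefficients among }B,C)$, $1\le j\le d-1$, are integral over $\cO_{K,S}$, so $M := \cO_{K,S}\nu_0\oplus\cdots\oplus\cO_{K,S}\nu_{d-1}\subseteq\cO_{L,S}$; the change-of-basis matrix from $\{1,\theta,\dots,\theta^{d-1}\}$ to $\{\nu_j\}$ is lower triangular with diagonal $(1,A,\dots,A)$, whence $\Delta(M) = A^{2(d-1)}\Delta(h) = \Delta(g)$, where $h=g/A$ is the monic minimal polynomial of $\theta$. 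The standard relation between the discriminant of a full lattice $M\subseteq\cO_{L,S}$ and the relative discriminant then gives $\Delta(g)\,\cO_{K,S} = [\cO_{L,S}:M]^2\,\Delta(\cO_{L,S}/\cO_{K,S})$ as ideals; taking $k$ to be a generator of the module index $[\cO_{L,S}:M]$ gives the stated equation with $k\in\cO_{K,S}$ — at the level actually used later, $v_{\fp}(\Delta(g))\equiv v_{\fp}(\Delta(\cO_{L,S}/\cO_{K,S}))\bmod 2$ with the former dominating at every prime $\fp\notin S$, and in particular $\fp$ ramifies in $L$ iff $\fp\mid\Delta(\cO_{L,S}/\cO_{K,S})$.

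The main obstacle is entirely in the first part: carrying out the resultant computation while tracking every sign and exponent, especially the evaluation of $\prod_{h(\beta)=0}g(\beta)$, which is where the hypothesis $(m,d)=1$ enters (to re-index the product over $m$-th powers of the roots of $h$) and where the relation $\beta^{d-m}=-mB/(dA)$ must be substituted in two places. The second part is routine once the integral lattice $M$ is identified; the only point needing a word is that each $\nu_j$ is genuinely integral, which is the classical fact that $A\theta$ (hence each $\nu_j$) satisfies a monic polynomial over $\cO_{K,S}$.
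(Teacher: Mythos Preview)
Your argument is correct and, for the discriminant formula, essentially the same as the paper's: both compute $\prod_{g'(r)=0} g(r)^{m_r}$ by separating the critical point $r=0$ (contributing $C^{m-1}$) from the $d-m$ nonzero critical points $\zeta^j\eta$, and both evaluate the latter product via the substitution $\eta^{d-m}=-mB/(dA)$ together with the re-indexing $\{\zeta^{jm}\}=\{\zeta^j\}$ coming from $(m,d-m)=1$. The only packaging difference is that you invoke the resultant identity $\Delta(g)=(-1)^{d(d-1)/2}A^{-1}\res(g,g')$ and multiplicativity of the resultant directly, whereas the paper quotes its own iterated-discriminant formula~\eqref{eq:iterdisc} at $n=0$, which is the same identity in disguise.

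For the relation $\Delta(g)=k^2\Delta(\cO_{L,S}/\cO_{K,S})$, the paper simply cites standard references (Janusz), while you supply the explicit lattice argument via the integral elements $\nu_j$; this is more informative and perfectly fine. One small caveat: the module index $[\cO_{L,S}:M]$ is \emph{a priori} an ideal of $\cO_{K,S}$, not obviously principal, so ``taking $k$ to be a generator'' is not quite justified as stated. But you already note the fix: the equality holds as ideals, giving $v_{\fp}(\Delta(g))\equiv v_{\fp}(\Delta(\cO_{L,S}/\cO_{K,S}))\pmod{2}$ at every $\fp\notin S$, which is all that the subsequent arguments use.
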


We will also make use of the following discriminant formula:
\begin{equation}
\label{eq:iterdisc}
\Delta(f^{n+1}(x)-t)=\tilde{A}^{d^n}[\Delta(f^n(x)-t)]^d\prod_{f'(r)=0}(f^{n+1}(r)-t)^{m_r},
\end{equation}
where $f$ is a polynomial of degree $d$ and lead coefficient $A$,
where $\tilde{A}=(-1)^{d(d-1)/2}d^d A^{d-1}$,
and where $m_r$ is the multiplicity of $r$ as a root of $f'(x)$.
See \cite[Proposition 3.2]{AHM}.

\begin{proof}[Sketch of Proof of Lemma~\ref{lemma.disc}]
This is a standard result,
using the fact that any prime $\fp\nmid A$ of $\cO_{K,S}$ ramifying in $\cO_{L,S}$ divides
$\Delta(\cO_{L,S}/\cO_{K,S})$.
See, for example, Lemma~7.2, Theorem~7.3, and Theorem~7.6 of \cite{Jan}.

To prove the formula for $\Delta(g)$, we apply formula~\eqref{eq:iterdisc}
with $f=g$, $n=0$, and $t=0$. More precisely, $x=0$ is a critical point of $g$
of multiplicity $m-1$, and we have $g(0)^{m-1}=C^{m-1}$. The other critical points
are $\zeta^j \eta$ for $1\leq j \leq d-m$, where $\eta$ is a $(d-m)$-th root of
$-mB/(dA)$, and where $\zeta$ is a primitive $(d-m)$-th root of unity. Thus,
\begin{align}
\label{eq:gprod}
\prod_{j=1}^{d-m} g\left( \zeta^j \eta \right) &=
\prod_{j=1}^{d-m} \left( C - \left( \frac{m-d}{d} \right) B \zeta^{jm}\eta^m \right)
\notag \\
&= C^{d-m} - \left[ \left( \frac{m-d}{d} \right)^{d-m} B^{d-m} \left( -\frac{mB}{dA} \right)^m \right]
\\
&= d^{-d} A^{-m} \left[ d^d A^m C^{d-m} + (-1)^{d-1} (d-m)^{d-m} m^m B^d \right],
\notag
\end{align}
where we have used the fact that $d$ and $(d-m)$ are relatively prime in the second equality,
to deduce that
\[ \{\zeta^{jm} : 1\leq j\leq d-m\}=\{\zeta^{j} : 1\leq j\leq d-m\} .\]
Multiplying by $g(0)^{m-1}$ and $(-1)^{d(d-1)/2}d^dA^{d-1}$ as in formula~\eqref{eq:iterdisc},
the desired formula for $\Delta(g)$ follows immediately.
\end{proof}


\begin{lemma}
\label{lemma.transp}
Let $A,B,C,d,m,g,K,L$ be as in Lemma~\ref{lemma.disc}. If a prime $\fp\nmid ABC$ of $\cO_{K,S}$
ramifies in $\cO_{L,S}$, and if $\fq$ is a prime of the Galois closure of $L$ over $K$,
then $\fp\nmid md(d-m)$,
and the ramification group $I(\fq|\fp)$ is generated by a single transposition
of the roots of $g$.
\end{lemma}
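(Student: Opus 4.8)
The plan is to analyze the ramification at $\fp$ by combining the discriminant formula of Lemma~\ref{lemma.disc} with a local study of the Newton polygon of $g$ at $\fp$. First I would observe that since $\fp$ ramifies in $\cO_{L,S}$ and $\fp\nmid A$, Lemma~\ref{lemma.disc} forces $\fp\mid\Delta(g)$, hence $\fp$ divides the bracketed factor $\big[(-1)^{d-1}m^m(d-m)^{d-m}B^d+d^dA^mC^{d-m}\big]$ (the factors $A^{d-m-1}C^{m-1}$ are coprime to $\fp$ by hypothesis $\fp\nmid ABC$). I want to show the stronger statement that $\fp$ divides this factor \emph{exactly once}; this is the technical heart of the argument. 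Granting it, the factor $k^2$ in $\Delta(g)=k^2\Delta(\cO_{L,S}/\cO_{K,S})$ must be a $\fp$-unit, so $v_\fp(\Delta(\cO_{L,S}/\cO_{K,S}))=1$, which by the standard theory of the different (Dedekind's discriminant theorem, as in \cite{Jan}) means that $\fp$ has exactly one ramified prime above it in $L$, with ramification index $2$ and tame (so $\fp\nmid 2$, and in particular the prime is tamely ramified). A single tamely ramified prime of index $2$, with all other primes above $\fp$ unramified, means precisely that the inertia group $I(\fq|\fp)$ in the Galois closure is generated by a transposition in its action on the roots of $g$: inertia is cyclic (tame case), it acts with one orbit of size $2$ and the rest fixed points on the $d$ roots, so its generator is a transposition. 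This also yields $\fp\nmid d$ automatically from tameness together with $d^n$-type ramification constraints; one gets $\fp\nmid md(d-m)$ by noting that if $\fp$ divided any of $m$, $d-m$, or $d$, the bracketed factor would reduce modulo $\fp$ in a way incompatible with dividing it exactly once (see below).

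For the key multiplicity-one claim, I would pass to the completion $K_\fp$ and study the Newton polygon of $g(x)=Ax^d+Bx^m+C$ at $\fp$. Write $a=v_\fp(A)=0$, $b=v_\fp(B)$, $c=v_\fp(C)$; since $\fp\nmid ABC$ we have $b=c=0$ as well, so actually $g$ has a Newton polygon that is a single horizontal segment and $g$ is a unit times a product of factors each with unit constant term. The ramification must therefore come from the reduction $\bar g(x)\in(\cO_K/\fp)[x]$ having a repeated root: $\fp$ ramifies in $\cO_{L,S}$ exactly when $\bar g$ is not separable. Now $\gcd(g,g')$ over the residue field detects the repeated root, and $g'(x)=dAx^{d-1}+mBx^{m-1}=x^{m-1}(dAx^{d-m}+mB)$; modulo $\fp$, the repeated root $\bar\theta_0$ of $\bar g$ is a root of $\bar g'$. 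If $\bar\theta_0=0$ then $C\equiv 0$, contradiction; so $\bar\theta_0$ is a nonzero root of $dAx^{d-m}+mB\pmod\fp$, which already needs $\fp\nmid dm$ (else $\bar g'$ degenerates and one checks $\bar g$ can't have the right repeated-root structure — this is where $\fp\nmid md(d-m)$ enters). At such a root one computes $\bar g''(\bar\theta_0)\neq 0$ using $\fp\nmid d(d-m)$, so $\bar\theta_0$ is a root of $\bar g$ of multiplicity \emph{exactly} $2$. Then a Hensel/Krasner argument shows $g$ factors over $K_\fp$ as (a degree-$2$ factor with that repeated reduction) times (a product of factors unramified at $\fp$), and the degree-$2$ factor, being an Eisenstein-type quadratic after a shift, contributes $v_\fp=1$ to the discriminant while the rest contribute $0$. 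Equivalently and more cleanly: $v_\fp(\Delta(g))=\sum_{\bar\theta_0}(\text{mult}-1)\cdot(\text{something})$, and with a single repeated root of multiplicity exactly $2$ it equals $1$.

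The main obstacle I anticipate is the bookkeeping in the last step: proving that the single double root of $\bar g$ lifts to a genuine ramified quadratic factor of $g$ over $K_\fp$ \emph{and} that this accounts for the full $\fp$-valuation of the discriminant, i.e.\ that there is no "hidden" extra ramification or extra contribution. The clean way is to note $v_\fp(\Delta(g))\geq 1$ always when there's a repeated root, and $v_\fp(\Delta(g))\leq 1$ precisely because the bracket $(-1)^{d-1}m^m(d-m)^{d-m}B^d+d^dA^mC^{d-m}$ is, after the substitutions forced by "$\bar\theta_0$ a double root of $\bar g$", congruent to a nonzero constant times a uniformizer — one literally evaluates the bracket using $A C^{d-m}\equiv -d^{-d}(-1)^{d-1}(d-m)^{d-m}m^m B^d\pmod{\fp}$ is the leading obstruction and the next term is the nonvanishing one. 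I would organize this as: (i) reduce to $\fp\mid\text{bracket}$ and $\fp\nmid ABC \cdot md(d-m)$; (ii) show the double root is unique and of multiplicity $2$ via $\bar g''\neq 0$; (iii) conclude $v_\fp(\Delta(g))=1$ hence $v_\fp(\Delta(\cO_{L,S}/\cO_{K,S}))=1$; (iv) invoke tame ramification theory to get that $I(\fq|\fp)$ is generated by a transposition. Steps (iii)–(iv) are standard once (i)–(ii) are in hand.
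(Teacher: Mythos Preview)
Your proposal contains a genuine gap: the ``technical heart'' claim that $v_\fp(\Delta(g))=1$ (equivalently, that $\fp$ divides the bracketed factor exactly once) is simply false under the hypotheses of the lemma. Take $K=\bQ$, $d=2$, $m=1$, $g(x)=x^2+x+7$, $\fp=3$. Then $\fp\nmid ABC=7$, the discriminant is $\Delta(g)=-27$, and $3$ ramifies in $L=\bQ(\sqrt{-27})=\bQ(\sqrt{-3})$; yet $v_3(\Delta(g))=3$, not $1$. Your assertion that a single double root in the reduction forces $v_\fp(\Delta(g))=1$ is therefore wrong: the lifted quadratic factor $q$ can have $\operatorname{disc}(q)$ divisible by an arbitrarily high power of $\fp$. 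The detour through $v_\fp(\Delta(\cO_{L,S}/\cO_{K,S}))=1$ and Dedekind's theorem cannot be made to work.

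There is a second gap: you assert that the double root of $\bar g$ is \emph{unique}, but your argument via $\bar g''(\bar\theta_0)\neq 0$ only shows that each repeated root has multiplicity exactly~$2$; it does not rule out two distinct double roots. If there were two, inertia could act as a product of two disjoint transpositions, and the conclusion would fail. The paper handles uniqueness by a separate argument: any two double roots $\eta,\xi$ satisfy $\eta^{d-m}\equiv\xi^{d-m}\equiv -mB/(dA)\pmod\fp$, so $\eta\equiv\zeta\xi$ for a $(d-m)$-th root of unity $\zeta$; then using $(m,d-m)=1$ one computes $g(\eta)\equiv(1-\zeta^m)C\pmod\fp$, forcing $\zeta\equiv 1$. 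Once you know $\bar g\equiv A(x-\eta)^2 g_1(x)$ with $g_1$ separable and $g_1(\eta)\not\equiv 0$, you can conclude \emph{directly} that $I(\fq|\fp)$ is generated by a transposition --- Hensel lifts this to $g=q\cdot h$ over $K_\fp$ with $h$ unramified and $q$ quadratic, so inertia fixes the roots of $h$ and acts (nontrivially, since $\fp$ ramifies) on the two roots of $q$. No control on $v_\fp(\Delta(g))$ beyond positivity is needed.
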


\begin{proof} 
Let $\fp\nmid ABC$ be a prime of $\cO_{K,S}$ which ramifies in $\cO_{L,S}$.
Then $\fp\mid \Delta(\cO_{L,S}/\cO_{K,S})$, and hence
by Lemma \ref{lemma.disc}, we also have $\fp\nmid md(d-m)$, since $(d,m)=1$.

Because $\fp$ ramifies, $g(x)$ must have at least one multiple root modulo $\fp$.
On the other hand, if $\eta$ is a mod-$\fp$ root of multiplicity $\ell >2$,
then $\eta$ is also at least a double root of the derivative
$g'(x)\equiv dAx^{d-1}+mBx^{m-1} \pmod{\fp}$.
However, since $\fp\nmid ABCdm(d-m)$, this cannot be the case unless $\eta\equiv 0 \pmod{\fp}$;
but then $\eta$ would not have been a root of $g$ itself, since $\fp\nmid C$.
Therefore each root of $g(x) \pmod{\fp}$ has at multiplicity at most two.

Now suppose $\eta$ and $\xi$ are both double roots of $g(x)\pmod{\fp}$.
Then both $\eta$ and $\xi$ are nonzero simple roots of $g'(x)\pmod{\fp}$,
and hence
\[ \eta^{d-m}\equiv \xi^{d-m}\equiv \frac{-mB}{dA}\pmod{\fp} . \]
Thus, $\eta\equiv\zeta\xi\pmod{\fp}$, where $\zeta$ is a $(d-m)$-th root of unity.
If $\zeta\not\equiv 1 \pmod{\fp}$, then $\zeta^m\not\equiv 1 \pmod{\fp}$,
since $(m,d-m)=1$. Therefore,
\[ g(\eta) = C+\big( g(\eta) - C \big) \equiv C+\zeta^m \big( g(\xi) - C \big)
\equiv (1-\zeta^m) C \not\equiv 0 \pmod{\fp}, \]
a contradiction. Hence, we must have $\eta\equiv\xi \pmod{\fp}$.

The two previous paragraphs together yield that $g$ has exactly one multiple root
modulo $\fp$, and it is a double root.
That is,
\[ g(x) \equiv A(x-\eta)^2 g_1(x) \pmod{\fp}, \]
where $g_1(x)\in \cO_{K,S}/\fp [x]$ is a separable polynomial with
$g_1(\eta)\not\equiv 0 \pmod{\fp}$.
Since $g$ is irreducible over $K$, it follows that
$I(\fq|\fp)$ is generated by a single transposition, for any $\fq$ lying above $\fp$.
\end{proof}

%

\begin{lemma}\label{lemma.minertia}
Let $d>m\geq 2$, let $b, x_0\in K$,
and suppose there is a prime $\fp$ of $\cO_K$ such that
$\fp\nmid (d-m)$, and $v_{\fp}(b) < \min\{v_{\fp}(x_0),0\}$,
with $(d-m) | v_{\fp}(b)$ and $\gcd(m, v_{\fp}(x_0/b))=1$.
Let $f(x) = x^d - b x^{m}$, let $n\geq 0$, and let $\alpha \in f^{-n}(x_0)$.
Suppose that $f^n(x)-x_0\in K[x]$ is irreducible over $K$.
Then there is a prime $\fP$ of $K(\alpha)$ lying above $\fp$,
and a prime $\fQ$ of $K(f^{-1}(\alpha))$, such that
\begin{itemize}
\item $\fP$ has ramification index $m^n$ over $\fp$,
\item $m^n v_{\fP}(\alpha/b)$ is a positive integer relatively prime to $m$,
where $v_{\fP}$ is the $\fP$-adic valuation on $K(\alpha)$ extending $v_{\fp}$,
\item $\fQ$ lies above $\fP$, and
\item the ramification group $I(\fQ|\fP)$ acts transitively on $m$ roots of $f(x)-\alpha$
and fixes the other $d-m$ roots.
\end{itemize}
\end{lemma}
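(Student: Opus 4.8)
I would argue by induction on $n$, with the substance being a local analysis, via Newton polygons, of the trinomial $f(x)-\alpha=x^{d}-bx^{m}-\alpha$ over the completion $K(\alpha)_{\fP}$. Write $v_{\fp}(b)=-\beta$, so $\beta>0$, $(d-m)\mid\beta$, and $\gcd(m,v_{\fp}(x_{0}/b))=1$ (in particular $x_{0}\neq 0$). For the base case $n=0$ one takes $\fP=\fp$: then $e(\fP|\fp)=1=m^{0}$, and $m^{0}v_{\fp}(x_{0}/b)=v_{\fp}(x_{0})-v_{\fp}(b)$ is a positive integer because $v_{\fp}(b)<v_{\fp}(x_{0})$, and it is coprime to $m$ by hypothesis; the last two bullets at level $0$ will follow from the local claim below.

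The \emph{local claim} is: if $\alpha\in f^{-n}(x_{0})$ and $\fP$ is a prime of $K(\alpha)$ above $\fp$ with $e(\fP|\fp)=m^{n}$ and with $k:=m^{n}v_{\fP}(\alpha/b)$ a positive integer coprime to $m$, then over $K(\alpha)_{\fP}$ one has $f(x)-\alpha=h_{1}(x)h_{2}(x)$ with $\deg h_{1}=m$, $h_{1}$ irreducible and totally ramified over $K(\alpha)_{\fP}$, and $\deg h_{2}=d-m$ with every root of $h_{2}$ generating an unramified extension of $K(\alpha)_{\fP}$; moreover, for any prime $\fQ$ of $K(f^{-1}(\alpha))$ above $\fP$ obtained from an embedding extending the one defining $\fP$, the inertia group $I(\fQ|\fP)\subseteq\Gal(K(f^{-1}(\alpha))/K(\alpha))$ acts transitively on the $m$ roots of $h_{1}$ and fixes the $d-m$ roots of $h_{2}$. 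To prove the factorization I would compute the $\fP$-adic Newton polygon of $f(x)-\alpha$, normalizing the valuation on $K(\alpha)_{\fP}$ to be $\bZ$-valued: its vertices lie among $(0,k-m^{n}\beta)$, $(m,-m^{n}\beta)$, $(d,0)$, and the inequality $-m^{n+1}\beta<k(d-m)$ shows $(m,-m^{n}\beta)$ is a genuine vertex, so there are exactly two segments, of horizontal lengths $m$ and $d-m$. The first has slope $-k/m$, in lowest terms since $\gcd(k,m)=1$, so every root of the corresponding degree-$m$ factor $h_{1}$ generates an extension of $K(\alpha)_{\fP}$ with ramification index divisible by $m$; as $\deg h_{1}=m$ this forces $h_{1}$ to be irreducible and totally ramified. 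The second segment has integral slope because $(d-m)\mid\beta$; substituting $x=cy$ with $c\in K_{\fp}$ satisfying $(d-m)v_{\fp}(c)=v_{\fp}(b)$ turns $f(x)-\alpha$ into $c^{d}(y^{d}-uy^{m}-\epsilon)$ with $u$ a unit and $v_{\fP}(\epsilon)>0$, whose reduction modulo $\fP$ is $y^{m}(y^{d-m}-\bar u)$ with $\bar u\neq 0$; here $y^{d-m}-\bar u$ is \emph{separable}, precisely because $\fp\nmid(d-m)$, so Hensel's lemma splits this coprime factorization and produces a monic degree-$(d-m)$ factor with separable reduction, which rescales back to $h_{2}$, each of whose roots generates an unramified extension. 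For the inertia assertion, note $K(f^{-1}(\alpha))$ is the splitting field of $f(x)-\alpha$ over $K(\alpha)$, hence Galois over $K(\alpha)$, and that for a root $\rho$ the subgroup $I(\fQ|\fP)\cap\Stab(\rho)$ is precisely the inertia group of $\fQ$ over $\fQ\cap K(\alpha)_{\fP}(\rho)$; by multiplicativity of ramification indices the $I(\fQ|\fP)$-orbit of $\rho$ then has size $e(K(\alpha)_{\fP}(\rho)/K(\alpha)_{\fP})$, which equals $m$ for the $m$ roots of $h_{1}$ (a single orbit) and $1$ for the $d-m$ roots of $h_{2}$.

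With the claim in hand, the last two bullets for $n=0$ follow by applying it with $\alpha=x_{0}$, $\fP=\fp$. For the inductive step, assume the result for $n$ and let $\alpha\in f^{-(n+1)}(x_{0})$; put $\alpha_{*}=f(\alpha)\in f^{-n}(x_{0})$. Irreducibility of $f^{n+1}(x)-x_{0}$ implies that of $f^{n}(x)-x_{0}$, so the inductive hypothesis gives a prime $\fP_{*}$ of $K(\alpha_{*})$ with $e(\fP_{*}|\fp)=m^{n}$ and $m^{n}v_{\fP_{*}}(\alpha_{*}/b)$ a positive integer coprime to $m$; it also forces $f(x)-\alpha_{*}$ to be the minimal polynomial of $\alpha$ over $K(\alpha_{*})$. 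Hence we may pick an embedding $K(\alpha)\hookrightarrow\overline{K(\alpha_{*})_{\fP_{*}}}$ restricting to the embedding defining $\fP_{*}$ and sending $\alpha$ to a root of the factor $h_{1}$ from the local claim (applied to $\alpha_{*}$); let $\fP$ be the induced prime of $K(\alpha)$. Then $K(\alpha)_{\fP}=K(\alpha_{*})_{\fP_{*}}(\alpha)$ is totally ramified of degree $m$ over $K(\alpha_{*})_{\fP_{*}}$, so $e(\fP|\fp)=m^{n+1}$; and since the first Newton-polygon segment gives $v_{\fP}(\alpha)=v_{\fP_{*}}(\alpha_{*}/b)/m$, we get $m^{n+1}v_{\fP}(\alpha/b)=m^{n}v_{\fP_{*}}(\alpha_{*}/b)+m^{n+1}\beta$, a positive integer congruent modulo $m$ to $m^{n}v_{\fP_{*}}(\alpha_{*}/b)$ and so coprime to $m$. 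This establishes the first two bullets at level $n+1$, and feeding $\fP$ into the local claim supplies $\fQ$ and the last two bullets, completing the induction.

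The main obstacle is the local claim, in particular the statement that the second Newton-polygon segment contributes only \emph{unramified} roots; this is exactly where the hypotheses $(d-m)\mid v_{\fp}(b)$ and $\fp\nmid(d-m)$ are used, and it goes hand in hand with the bookkeeping needed to keep the exponent $k$ coprime to $m$ all the way up the tower of normalized valuations $v_{\fp}\subseteq v_{\fP_{*}}\subseteq v_{\fP}\subseteq v_{\fQ}$.
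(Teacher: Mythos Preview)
Your argument is correct and follows essentially the same route as the paper: both proceed by induction on $n$, analyze the Newton polygon of $f(x)-\alpha$ over the completion to split off a totally ramified degree-$m$ factor and a degree-$(d-m)$ factor whose roots are unramified (using the substitution by a $(d-m)$-th root of $b$ and the hypothesis $\fp\nmid(d-m)$ to get separable reduction), and then read off the inertia action. The only cosmetic differences are that you package the local analysis as a single claim applied uniformly at every level, whereas the paper separates the first two bullets (its Step~1) from the last two (its Step~2); and your inertia argument uses an orbit--stabilizer count $|I(\fQ|\fP)\cdot\rho|=e(K(\alpha)_{\fP}(\rho)/K(\alpha)_{\fP})$, while the paper instead identifies $I(\fQ|\fP)$ with $\Gal(L_{\fQ}/L'_{\fQ})$ for $L'_{\fQ}$ the maximal unramified subextension and observes the degree-$m$ factor remains irreducible over $L'_{\fQ}$.
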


\begin{proof}
\textbf{Step~1}. We will prove the first two bullet points by induction on $n\geq 0$.
For $n=0$, we have $\alpha=x_0$; choosing $\fP=\fp$, both points hold trivially.

Assuming they hold for $n-1$, let $\beta=f(\alpha)\in f^{-(n-1)}(x_0)$. By our
hypothesis that $f^n(x)-x_0$ is
irreducible over $K$, the previous iterate $f^{(n-1)}(x)-x_0$ must also be irreducible.
By our inductive hypothesis,
there is a prime $\fP'$ of $K(\beta)$ with ramification index $m^{n-1}$ over $\fp$,
such that  $m^{n-1} v_{\fP'}(\beta/b)$ is a positive integer relatively prime to $m$.

\begin{figure}
\scalebox{1}{\includegraphics{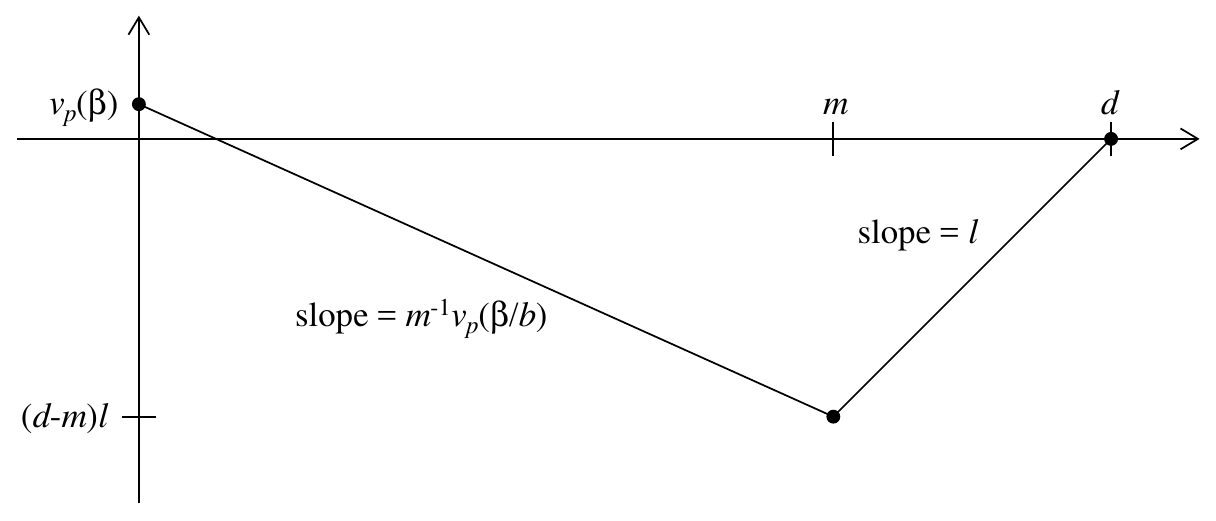}}
\caption{The Newton polygon for \mbox{$f(x)-\beta$} in Lemma~\ref{lemma.minertia}.}
\label{fig:newtpoly}
\end{figure}

Thus, the Newton polygon of $f(x)-\beta = x^d -bx^m - \beta$ with respect to
$v_{\fP'}$ consists of two segments: one of length $m$ and slope
$-m^{-1} v_{\fP'}(\beta/b)<0$,
and one of length $d-m$ and slope $-v_{\fp}(b)/(d-m)$, which is a positive integer;
see Figure~\ref{fig:newtpoly}.
That is, $f(x)-\beta$ factors over the local field $K(\beta)_{\fP'}$
as $f=gh$, where $g,h\in K(\beta)_{\fP'}[x]$,
with $\deg(g)=m$ and $\deg(h)=d-m$.
Moreover, the Newton polygon of each of $g$ and $h$ consists of a single segment,
of slopes $-m^{-1} v_{\fP'}(\beta/b)$ and $-v_{\fp}(b)/(d-m)$, respectively.
Since $f^n-x_0$ is irreducible, then considering the Galois extension
$K_n:=K(f^{-n}(x_0))$,
we may apply an appropriate $\sigma\in\Gal(K_n/K)$ to assume that $\alpha$ is a root of $g$.

Because $v_{\fP'}(\beta/b)=N/m^{n-1}$ for some positive integer $N$ relatively prime to $m$,
and because the Newton polygon of $g$ consists of a single segment of slope $-N/m^n$,
it follows that $K(\alpha)$ has a prime $\fP$ of ramification index $m$ over $\fP'$,
and hence of index $m^n$ over $\fp$, proving the first bullet point.
Letting $v_{\fP}$ denote the $\fP$-adic valuation on $K(\alpha)$ extending $v_{\fP'}$,
we have $v_{\fP}(\alpha)=N/m^n >0$. Since $v_{\fP}(b)=v_{\fp}(b)$ is a negative integer,
it follows that $m^n v_{\fP}(\alpha/b)$ is a positive integer relatively prime to $m$,
proving the second bullet point.

\textbf{Step 2}. Let $L:=K(f^{-1}(\alpha))$,
let $\fP$ be a prime of $K(\alpha)$ satisfying the first two bullet points,
and let $v_{\fP}$ be the $\fP$-adic valuation on $K(\alpha)$ extending $v_{\fp}$.
As in Step~1, we may factor $f(x)-\alpha=g(x) h(x)$,
where the Newton polygons of $g,h\in K(\alpha)_{\fP}[x]$
each consist of a single segment,
of length $m$ and slope $-m^{-1} v_{\fP}(\alpha/b)$ for $g$,
and of length $d-m$ and slope $\ell:=-v_{\fp}(b)/(d-m)$ for $h$.
In fact, if $\pi\in\cO_K$ is a uniformizer for $\fp$, then the polynomial
\[ F(x) := \pi^{d\ell} \Big( f\big(\pi^{-\ell} x\big) - \alpha \Big) = x^d - \pi^{(d-m)\ell} b x^m - \pi^{d\ell}\alpha
\in K(\alpha)[x] \]
has $K(\alpha)_{\fP}$-integral coefficients, with $F\equiv x^m(x^{d-m}- c) \pmod{\fP}$,
where $c\not\equiv 0 \pmod{\fP}$.
Therefore, the polynomial
\[ H(x) := \pi^{(d-m)\ell} h\big( \pi^{-\ell} x\big) \in K(\alpha)_{\fP}[x] \]
also has $K(\alpha)_{\fP}$-integral coefficients,
and $H\equiv x^{d-m} - c\pmod{\fP}$.
Since $\fp\nmid (d-m)$, the splitting field of $H$, and hence of $h$, is unramified over $\fP$.

Let $\fQ$ be any prime of $L$ lying over $\fP$, and let $\gamma_1,\ldots,\gamma_d$
be the roots of $f(x)-\alpha$ in the local field $L_{\fQ}$.
By the factorization $f-\alpha=gh$ of the previous paragraph, $d-m$ of the roots
(without loss, $\gamma_{m+1},\ldots,\gamma_d$) are roots of $h(x)$,
and hence they lie in an unramified extension $L'_{\fQ}$
of $K(\alpha)_{\fP}$ contained in $L_{\fQ}$.
On the other hand, the remaining roots  $\gamma_1,\ldots,\gamma_m$ are roots of $g$,
which is totally ramified over $K(\alpha)_{\fP}$
and hence irreducible over $L'_{\fQ}$.

The decomposition group $D(\fQ|\fP)$ is canonically isomorphic to the Galois group
$\Gal(L_{\fQ}/K(\alpha)_{\fP})$, and the
inertia group $I(\fQ|\fP)$ is canonically isomorphic to the Galois group $\Gal(L_{\fQ}/L'_{\fQ})$.
Thus, $I(\fQ|\fP)$ acts transitively on the roots of $g$, while fixing the roots of $h$,
as desired.
\end{proof}

\begin{lemma}
\label{lem:genSd}
Let $d\geq 3$, let $m$ be an integer relatively prime to $d$ with $d/2<m<d$,
and let $G\subseteq S_d$ be a subgroup that
\begin{itemize}
\item contains a transposition,
\item acts transitively on $\{1,2,\ldots, d\}$, and
\item has a subgroup $H$ that acts trivially $\{m+1,m+2,\ldots,d\}$
and transitively on $\{1,2,\ldots, m\}$.
\end{itemize}
Then $G=S_d$.
\end{lemma}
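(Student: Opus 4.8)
The overall strategy is to use the classical fact that a transitive subgroup of $S_d$ containing a transposition and acting "primitively enough" must be all of $S_d$. Concretely, I'd proceed in three steps: first show $G$ is primitive, then invoke the theorem (due to Jordan) that a primitive permutation group containing a transposition is the full symmetric group. The only real work is establishing primitivity, and here is where the subgroup $H$ and the hypothesis $m > d/2$ enter.

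Let me think about the steps. First, recall that $G$ acts transitively on $\Omega = \{1,\dots,d\}$. Suppose for contradiction that $\mathcal B$ is a nontrivial block system, i.e.\ $\Omega$ is partitioned into blocks $B_1,\dots,B_k$ each of size $b$, with $1 < b < d$, $bk = d$, and $G$ permutes the blocks. Consider the subgroup $H$: it fixes every point of $\{m+1,\dots,d\}$ and is transitive on $\{1,\dots,m\}$. Since $m > d/2 = bk/2$, we have $m > d - m$, so the set $\{1,\dots,m\}$ has more than half the points, hence it cannot be contained in the union of fewer than $k$ blocks unless — wait, more carefully: any block $B_j$ either meets $\{1,\dots,m\}$ or is contained in $\{m+1,\dots,d\}$. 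Because $|\{1,\dots,m\}| = m > d - m = |\{m+1,\dots,d\}|$, and each block has size $b$, at least one block, say $B_1$, satisfies $B_1 \cap \{1,\dots,m\} \neq \emptyset$ and in fact by a counting/pigeonhole argument there must be a block meeting $\{1,\dots,m\}$ in more than $b/2$ points; but the cleanest route is: since $H$ is transitive on $\{1,\dots,m\}$ and preserves $\mathcal B$, $H$ permutes the blocks, and the blocks meeting $\{1,\dots,m\}$ are permuted among themselves by $H$ transitively (as $H$ is transitive on their union's relevant points)... Actually the slick argument: the set $\{1,\dots,m\}$ is $H$-invariant, so it is a union of $H$-orbits on blocks restricted appropriately — but a single point of $\{1,\dots,m\}$ lies in one block $B_1$, and $H$-transitivity on $\{1,\dots,m\}$ forces $\{1,\dots,m\}$ to be a union of blocks, i.e.\ $b \mid m$. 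Then $\{m+1,\dots,d\}$ is also a union of blocks (the complementary ones), but $H$ fixes every point there, so... hmm, that alone doesn't give a contradiction.

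Here is the fix. Since $b \mid m$ and $b \mid d$ and $\gcd(m,d) = 1$, we get $b \mid \gcd(m,d) = 1$, forcing $b = 1$, contradiction. So $G$ is primitive. Then by Jordan's theorem (a primitive subgroup of $S_d$ containing a transposition equals $S_d$), we conclude $G = S_d$. I'd either cite Jordan's theorem directly (e.g.\ from a standard reference like Wielandt or Dixon--Mortimer) or give the short self-contained argument: the transposition $(i\,j)$ generates, together with its $G$-conjugates, a normal subgroup $N \trianglelefteq G$; the "graph" on $\Omega$ with edges the transpositions in $N$ is $G$-invariant, hence (by primitivity) connected, and a connected graph whose edge set is a union of conjugate transpositions forces $N$ to contain all transpositions, so $N = S_d$, whence $G = S_d$.

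The main obstacle — really the only nontrivial point — is the primitivity argument, and specifically making precise that $H$-transitivity on $\{1,\dots,m\}$ combined with $H$ preserving the block system forces $\{1,\dots,m\}$ to be a union of blocks. This needs: if a transitive group $H$ on a set $\Delta$ preserves a partition of an ambient set $\Omega \supseteq \Delta$ into blocks, and $\Delta$ is $H$-invariant, then $\Delta$ is a union of those blocks that it meets — which holds because for a block $B$ with $B \cap \Delta \neq \emptyset$, pick $x \in B \cap \Delta$; for any $y \in B \cap \Delta$ there's $h \in H$ with $hx = y$ (transitivity on $\Delta$), and $hB$ is the block containing $y = hx$, so $hB = B$, but this shows $B \cap \Delta$ is an $H$-suborbit — I need instead to observe directly that $\bigcup\{B : B \cap \Delta \neq \emptyset\}$ is $H$-invariant and contains $\Delta$, and its intersection with $\Delta$... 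Let me just say: the union $\Delta' = \bigcup\{B_j : B_j \cap \Delta \neq \emptyset\}$ is $H$-stable (as $H$ permutes blocks and fixes $\Delta$ setwise, it fixes the set of blocks meeting $\Delta$), and $\Delta \subseteq \Delta'$; if some block $B_1$ meeting $\Delta$ also meets $\Omega \setminus \Delta = \{m+1,\dots,d\}$, then... since $H$ fixes $\{m+1,\dots,d\}$ pointwise and $B_1 \cap \{m+1,\dots,d\} \neq \emptyset$, the stabilizer in $H$ of that point is all of $H$, so $H$ fixes $B_1$ setwise, so $H$ acts on $B_1 \cap \Delta$, but $H$ is transitive on $\Delta$ and $\Delta \not\subseteq B_1$ (since $|\Delta| = m > d - m \geq b \geq |B_1|$ when $k \geq 2$... need $m > b$, i.e.\ $m > b$; since $m > d/2 \geq b$ as $b \leq d/2$ when $k \geq 2$), contradiction. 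Hence no block meeting $\Delta$ meets the complement, so each block is inside $\Delta$ or inside the complement, giving $b \mid m$ and $b \mid (d-m)$, hence $b \mid \gcd(m, d-m) = \gcd(m,d) = 1$, so $b = 1$. That's the complete argument, and the $m > d/2$ hypothesis is exactly what guarantees $m > b$ for any proper block size.
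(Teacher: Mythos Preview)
Your proof is correct, and it is organized somewhat differently from the paper's, though the core combinatorial ingredients are the same.

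The paper does not invoke primitivity explicitly. Instead, it defines the equivalence relation $x\sim y \iff (x,y)\in G$ or $x=y$, shows (via transitivity of $G$) that all $\sim$-classes have the same size $j\mid d$, and then uses $H$ directly on these classes: since $j\mid d$ and $\gcd(m,d)=1$, some class of a point in $\{1,\dots,m\}$ contains a point $y\in\{m+1,\dots,d\}$; conjugating $(x,y)$ by elements of $H$ (which fix $y$) shows $\{1,\dots,m\}\subseteq[y]$, so $j>m>d/2$, forcing $j=d$.

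Your route instead first proves $G$ is primitive, via the clean observation that a block meeting both $\{1,\dots,m\}$ and its complement must be fixed setwise by $H$ (since $H$ fixes a point in it), forcing $\{1,\dots,m\}$ into that block and contradicting $m>d/2\geq b$; hence every block lies in one side, giving $b\mid m$, $b\mid d$, $b=1$. Then you apply Jordan's theorem. This is more modular: the primitivity step isolates exactly where $m>d/2$ and $\gcd(m,d)=1$ are used, and the conclusion then follows from a standard black box. The paper's version is fully self-contained (it essentially reproves the relevant case of Jordan along the way) and slightly shorter. Either approach is perfectly acceptable here.
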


\begin{proof}
\textbf{Step~1}. Define a relation $\sim$ on $\{1,\ldots, d\}$ by
$x\sim y$ if either $x=y$, or the transposition $(x,y)$ is an element of $G$.
Clearly $\sim$ is reflexive and symmetric. It is also transitive, because if
$x,y,z\in\{1,\ldots, d\}$ are distinct with $x\sim y$ and $y\sim z$, then
\[ (x,z) = (x,y)(y,z)(x,y)\in G, \quad \text{so } x\sim z.\]
(We also clearly have $x\sim z$ if any two of $x,y,z$ coincide.)
Thus, $\sim$ is an equivalence relation.

\textbf{Step~2}. We claim that each equivalence class of $\sim$ has the same size.
To see this, given $x,y \in \{1,\ldots, d\}$, denote by $[x]$ and $[y]$ the $\sim$-equivalence
classes of $x$ and $y$, respectively. Since $G$ acts transitively, there is some $\sigma\in G$
such that $\sigma(x)=y$. Then $\sigma$ maps $[x]$ into $[y]$, because for any $t\in [x]$,
we have $(x,t)\in G$, and hence
\[ \big( y, \sigma(t) \big) = \big( \sigma(x) , \sigma(t) \big) =
\sigma \circ (x,t) \circ \sigma^{-1} \in G, \]
so that $\sigma(t)\in [y]$. Similarly, $\sigma^{-1}$ maps $[y]$ into $[x]$.
Thus, $\sigma:[x]\to [y]$ is an invertible function, proving the claim.

\textbf{Step~3}. Let $j$ denote the common size of each equivalence class of $\sim$.
Then $j\geq 2$, since $G$ contains a transposition. Also, $j|d$ by Step~2, and 
because $\gcd(m,d)=1$, it follows that $j\nmid m$.
Thus, there must be some $x\in\{1,\ldots, m\}$ such that $[x]\not\subseteq \{1,\ldots, m\}$.
That is, there is some $y\in\{m+1,\ldots, d\}$ such that $(x,y)\in G$.

We claim that in fact, $\{1,\ldots, m\} \subseteq [y]$.
Indeed, for any $t\in \{1,\ldots, m\}$, there is some $\tau\in H$ such that $\tau(x)=t$;
and since $H$ acts trivially on $\{m+1,\ldots, d\}$, we also have $\tau(y)=y$.
Thus, $(y,t) = \tau\circ (x,y) \circ\tau^{-1}\in G$. That is, $t\sim y$, proving our claim.

This claim immediately implies that $j\geq m > d/2$.
Since $j|d$, we must have $j=d$. Hence, the whole set $\{1,\ldots,d\}$ is a single
equivalence class. That is, every transposition belongs to $G$;
therefore, $G=S_d$.
\end{proof}

\section{Sufficient conditions for large arboreal Galois groups}\label{section:sufficientconditions}

Our main tools for proving that certain arboreal Galois groups are as large as possible
are the following theorems.
We will apply the first to polynomials of degree $d\geq 4$, and the second to degrees $d=2,3$.

\begin{theorem}
\label{thm:biggalois}
Fix integers $d>m\geq 2$ with $(m,d)=1$ and $m>d/2$.
Let $b,x_0\in K$, where we can write $x_0=s/t$ and $b=u/w$
with $s,t,u,w\in \cO_K$ and $(s,t)=(u,w)=1$.
Suppose that the polynomial
\[ f(x) = x^d - bx^{m} \]
satisfies the following properties.
\begin{enumerate}
	\item there is a prime $\fp_1$ of $\cO_K$ with $v_{\fp_1}(b)\geq 1$ and $v_{\fp_1}(x_0)=1$;
	\item there is a prime $\fp_2$ of $\cO_K$ such that
	\begin{enumerate}
		\item $\fp_2\nmid (d-m)$,
		\item $v_{\fp_2}(b)<\min\{v_{\fp_2}(x_0),0\}$,	
		\item $(d-m) | v_{\fp_2}(b)$,
		\item $\gcd(m,v_{\fp_2}(x_0/b))=1$;
	\end{enumerate}
	\item for each $n\geq 1$, there is a prime $\fp\nmid stuwd(d-m)$ of $\cO_K$
	such that $v_\fp(\Delta(f^n(x) - x_0))>0$ is odd, and such that
	$v_\fp(\Delta(f^{\ell}(x) - x_0))=0$ for all $0\leq \ell<n$.
\end{enumerate}
Then for all $n\geq 0$,
\[ \Gal\Big(K\big(f^{-n}(x_0)\big)/K\Big)\cong [S_d]^n . \]
\end{theorem}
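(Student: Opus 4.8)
The plan is to induct on $n$, with $n=0$ trivial. Assume $\Gal(K_{n-1}/K)\cong[S_d]^{n-1}$, where $K_j:=K\big(f^{-j}(x_0)\big)$; set $G_n:=\Gal(K_n/K)$ and $N:=\Gal(K_n/K_{n-1})$. Since $G_n$ embeds into $\Aut(T_{d,n})=[S_d]^n$ and restricts onto $G_{n-1}:=\Gal(K_{n-1}/K)\cong\Aut(T_{d,n-1})$ with kernel $N\hookrightarrow(S_d)^{d^{n-1}}$, it suffices to show $N=(S_d)^{d^{n-1}}$, i.e.\ $[K_n:K_{n-1}]=(d!)^{d^{n-1}}$.

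First I would use hypothesis (1) to prove $f^n(x)-x_0$ irreducible over $K$. Since $v_{\fp_1}(x_0)=1$ and $v_{\fp_1}(b)\geq 1$, the Newton polygon of $f(x)-x_0=x^d-bx^m-x_0$ at $\fp_1$ is the single segment from $(0,1)$ to $(d,0)$; as $\gcd(1,d)=1$ this forces $\fp_1$ to be totally ramified of degree $d$ in $K(f^{-1}(x_0))$, with $v_{\fp_1}(\alpha_1)=1/d$ for any root $\alpha_1$ of $f-x_0$. Running the same Newton-polygon computation over $K(\beta)$ for $\beta\in f^{-(n-1)}(x_0)$ (the $x^m$-coefficient remains strictly above the governing segment because $v_{\fp_1}(b)\geq 1$) shows by induction that $\fp_1$ is totally ramified of degree $d^n$ in $K(\alpha)$ for some $\alpha\in f^{-n}(x_0)$, so $f^n(x)-x_0$ is irreducible (its degree being $d^n$). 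Hence $f^{n-1}(x)-x_0$ is irreducible too, all level-$(n-1)$ vertices of $T_{d,n}$ are $K$-conjugate, and $f(x)-v$ is irreducible over $K(v)$ for each level-$(n-1)$ vertex $v$.

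Next I would show that for each level-$(n-1)$ vertex $v$ the group $G\leq S_d$ acting on the $d$ children of $v$ (i.e.\ $\Gal(K(f^{-1}(v))/K(v))$) is all of $S_d$, by checking the hypotheses of Lemma~\ref{lem:genSd}. Transitivity of $G$ is the irreducibility of $f(x)-v$ over $K(v)$ just established. For a transposition: the prime $\fp$ of hypothesis (3) is unramified in $K_{n-1}$ (as $v_\fp(\Delta(f^\ell-x_0))=0$ for $\ell<n$) but ramifies in $K_n$ (as $v_\fp(\Delta(f^n-x_0))>0$), so its inertia is a nontrivial subgroup of $N\hookrightarrow(S_d)^{d^{n-1}}$; since every $f(x)-v'$ is a trinomial with $\fp\nmid b$ and $\fp\nmid v'$ (no preimage of $x_0$ at level $<n$ can reduce mod $\fp$ to the fixed point $0$ of $f$, since $0\not\equiv x_0\pmod\fp$), Lemma~\ref{lemma.transp} shows the image of this inertia in each coordinate is a transposition or trivial, and it is nontrivial in at least one, so conjugating by $G_n$ (transitive on level-$(n-1)$ vertices) puts a transposition into $G$ for every $v$. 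For the subgroup $H\leq G$ demanded by Lemma~\ref{lem:genSd}, apply Lemma~\ref{lemma.minertia} with $\alpha=v$, with $n$ replaced by $n-1$, and with the prime $\fp_2$ of hypothesis (2) (its four sub-conditions are exactly the hypotheses of that lemma): it produces inertia above $\fp_2$ acting transitively on $m$ children of $v$ and fixing the other $d-m$. Lemma~\ref{lem:genSd} then gives $G=S_d$, i.e.\ $\Stab_{G_n}(v)$ acts as the full $S_d$ on the children of $v$.

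It remains to assemble $N=(S_d)^{d^{n-1}}$. Let $M_v$ be the splitting field of $f(x)-v$ over $K(v)$; then $K_n$ is the compositum of the fields $M_vK_{n-1}$, and the $v$-th coordinate projection of $N$ is $\Gal\big(M_v/(M_v\cap K_{n-1})\big)$, a normal subgroup of $\Gal(M_v/K(v))=S_d$, the same for all $v$ up to conjugacy. It is nontrivial (the inertia of $\fp$ shows this), and it is not contained in $A_d$: otherwise $\Delta(f(x)-v)$ would be a square in $K_{n-1}$ for every $v$, hence $\Delta(f^n-x_0)$ would be a square in $K_{n-1}$ (using $f^n-x_0=\prod_v(f(x)-v)$), contradicting that $\fp$ ramifies in $K\big(\sqrt{\Delta(f^n-x_0)}\big)$ while being unramified in $K_{n-1}$. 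So (for $d\geq 5$; the proper normal subgroups $V_4,A_4$ of $S_4$ are excluded in the same way when $d=4$) every coordinate projection of $N$ equals $S_d$. Finally one removes the remaining correlations between coordinates: the inertia element of $\fp$ lies in $N$ and acts on level $n$ as a product of disjoint transpositions occupying distinct coordinates, and conjugating it by $N$ (whose coordinate projections are now all of $S_d$) and by lifts of $\Aut(T_{d,n-1})$ builds up, one coordinate at a time, the full $S_d$ placed in a single coordinate; these subgroups generate $(S_d)^{d^{n-1}}$. I expect this last step to be the main obstacle: turning ``every local Galois group is $S_d$'' and ``$\sqrt{\Delta(f^n-x_0)}\notin K_{n-1}$'' into the full product really requires the inertia of $\fp$ to be controlled sharply --- essentially, to be generated by a \emph{single} transposition --- and this is precisely where the oddness in hypothesis (3) must be leveraged against the discriminant recursion~\eqref{eq:iterdisc} and the fact that the critical values of $f=x^d-bx^m$ differ from one another only by $(d-m)$-th roots of unity; this is the part of the argument that stays closest to Looper's.
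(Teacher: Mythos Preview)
Your first two stages match the paper's argument almost exactly: irreducibility of $f^n-x_0$ from hypothesis~(1) (the paper phrases it as Eisenstein at $\fp_1$, you as a Newton polygon, which is the same thing), and $\Gal\big(K(f^{-1}(\alpha))/K(\alpha)\big)=S_d$ via Lemmas~\ref{lemma.transp}, \ref{lemma.minertia}, and~\ref{lem:genSd}. That is Proposition~\ref{prop:galSd}.

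The gap is in your final assembly of $N=(S_d)^{d^{n-1}}$. You establish that each coordinate projection of $N$ is a normal subgroup of $S_d$ not contained in $A_d$, hence equals $S_d$; and that the inertia of $\fp$ in $N$ is a product of transpositions sitting in distinct coordinates. But these facts alone do \emph{not} force $N=(S_d)^{d^{n-1}}$: for instance, the subgroup $\{(\sigma_1,\dots,\sigma_k)\in S_d^{k}:\prod_i\mathrm{sgn}(\sigma_i)=1\}$ (with $k=d^{n-1}$) has every coordinate projection equal to $S_d$, is normal in $[S_d]^n$, and contains products of an even number of transpositions in distinct coordinates. Your parenthetical that the oddness of $v_\fp(\Delta(f^n-x_0))$ should make the number of transpositions odd is the right instinct, but even granting that, the diagonal $\{(\sigma,\dots,\sigma)\}$ survives when $k$ is odd (e.g.\ when $d$ is odd), and further Goursat-type possibilities remain. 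So ``remove correlations by conjugation'' is not a proof as written.

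The paper closes this gap differently and more directly, via Proposition~\ref{prop:Gtrans}: it shows that the prime $\fp$ from hypothesis~(3) ramifies in $M_i$ but is \emph{unramified in $\widehat{M_i}=\prod_{j\neq i}M_j$}. The argument is exactly the one you allude to at the end but do not carry out: if $\fp$ ramified in both $K_{n-1}M_i$ and $K_{n-1}M_j$, then modulo a prime $\fQ$ above $\fp$ both $\alpha_i$ and $\alpha_j$ would be critical values of $f$, hence $\alpha_i\equiv\zeta^m\alpha_j\pmod{\fQ}$ for some $(d-m)$-th root of unity $\zeta$; applying $f^{n-1}$ gives $x_0\equiv\zeta^{m^n}x_0$, forcing $\zeta\equiv 1$ (since $\fp\nmid (d-m)$ and $\gcd(m,d-m)=1$), so $\alpha_i\equiv\alpha_j$, contradicting $\fp\nmid\Delta(f^{n-1}-x_0)$. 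Once you know the inertia of $\fp$ lies in $\Gal(K_n/\widehat{M_i})$, that group is a normal subgroup of $S_d$ containing a transposition, hence all of $S_d$; this immediately gives a full $S_d$ supported in the single coordinate $i$, and the product over $i$ finishes. Replacing your last paragraph with this argument would make the proof complete.
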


For quadratic and cubic polynomials, conditions~(1) and~(3) of Theorem~\ref{thm:biggalois}
suffice, as follows.

\begin{theorem}
\label{thm:biggalois23}
Let $d=2$ or $d=3$.
Let $b,x_0\in K$, where we can write $x_0=s/t$ and $b=u/w$
with $s,t,u,w\in \cO_K$ and $(s,t)=(u,w)=1$.
Suppose that the polynomial
\[ f(x) = x^d - bx \]
satisfies properties~\emph{(1)} and~\emph{(3)} of Theorem~\ref{thm:biggalois}, with $m=1$.
Then for all $n\geq 0$,
\[ \Gal\Big(K\big(f^{-n}(x_0)\big)/K\Big)\cong [S_d]^n . \]
\end{theorem}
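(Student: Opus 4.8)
The plan is to prove Theorem~\ref{thm:biggalois23} by essentially the same induction on $n$ that is used (implicitly, in its full generality) for Theorem~\ref{thm:biggalois}, but streamlined to exploit the special structure of $d=2,3$ and $m=1$. The key simplification is that for these two degrees, $S_d$ is generated by a single transposition together with transitivity: every transitive subgroup of $S_2$ or $S_3$ containing a transposition is all of $S_d$. Thus condition~(2) and the group-theoretic input of Lemma~\ref{lem:genSd} are not needed; we only need conditions~(1) and~(3). Concretely, I would set $K_n := K(f^{-n}(x_0))$ and $G_n := \Gal(K_n/K)$, viewing $G_n \hookrightarrow \Aut(T_{d,n}) = [S_d]^n$ via the tree action described in the introduction, and show by induction that this embedding is an isomorphism.

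First I would establish the base of the induction and the irreducibility/separability bookkeeping. Using condition~(1), the prime $\fp_1$ with $v_{\fp_1}(b)\ge 1$ and $v_{\fp_1}(x_0)=1$: the Newton polygon of $f^n(x)-x_0 = $ (an Eisenstein-type polygon argument at $\fp_1$) forces $f^n-x_0$ to be irreducible over $K$ for every $n$, and in particular $f^{-n}(x_0)$ has exactly $d^n$ elements so that $K_n/K$ is Galois of the expected degree-dividing-$|[S_d]^n|$ type. (For $d=3$ one checks $f(x)=x^3-bx$ has its critical values behaving well; for $d=2$, $f(x)=x^2-bx$, this is classical.) This shows $f^n-x_0$ is separable and that $G_1 \cong S_d$ directly from irreducibility of $f-x_0$ together with $|S_d|\le d!$ and transitivity — for $d=2,3$ an irreducible degree-$d$ polynomial has Galois group a transitive subgroup of $S_d$, and condition~(3) with $n=1$ produces a prime whose inertia is generated by a transposition (by Lemma~\ref{lemma.transp}, noting $\fp\nmid stuwd(d-m)$ implies $\fp\nmid ABC$ in the relevant trinomial), giving $G_1 = S_d$.

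For the inductive step, assume $G_{n-1} \cong [S_d]^{n-1}$; I want $G_n \cong [S_d]^n$. The kernel of the restriction map $G_n \to G_{n-1}$ sits inside $(S_d)^{d^{n-1}}$ (one copy of $S_d$ for each vertex at level $n-1$), and the issue is to show this kernel is the full product. The standard mechanism (due in this context to Stoll, Jones, Looper) is: (a) the discriminant relation \eqref{eq:iterdisc} expresses $\Delta(f^n-x_0)$ in terms of $\Delta(f^{n-1}-x_0)$, the leading coefficient, and the values $f^n(r)-x_0$ at critical points $r$ of $f$; (b) a prime $\fp$ as in condition~(3) — dividing $\Delta(f^n-x_0)$ to odd order but not dividing $\Delta(f^\ell-x_0)$ for $\ell<n$, and coprime to $stuwd(d-m)$ — is unramified in $K_{n-1}$ but ramifies in $K_n$, and by Lemma~\ref{lemma.transp} its inertia in the Galois closure is generated by a single transposition; (c) such a transposition lies in the kernel $\ker(G_n\to G_{n-1})$ and swaps two children of a single level-$(n-1)$ vertex; (d) because $G_{n-1}$ already acts as the full $[S_d]^{n-1}$ on level $n-1$, conjugating this transposition around moves it to any pair of siblings over any level-$(n-1)$ vertex lying in the same $G_{n-1}$-orbit — and here the orbit is everything since $f^{n-1}-x_0$ is irreducible — so the kernel contains a transposition in every one of the $d^{n-1}$ coordinate copies of $S_d$; (e) finally, a normal subgroup of a direct product $(S_d)^{d^{n-1}}$ (normal because $\ker$ is normal in $G_n$ and $G_n$ surjects onto $[S_d]^{n-1}$ acting by permuting and acting on coordinates) that contains a transposition in each coordinate must be the whole product, using that $S_d$ is generated by transpositions for $d=2,3$ and a short argument that the "diagonal mixing" is prevented by having genuinely distinct coordinates hit. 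This forces $|G_n| = |[S_d]^n|$, hence $G_n \cong [S_d]^n$.

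The main obstacle I anticipate is step (d)–(e): carefully arguing that the inertia-generated transposition, once known to lie in the level-$n$ kernel, can be spread by $G_{n-1}$-conjugation to populate \emph{every} coordinate copy of $S_d$, and that the resulting normal subgroup of $(S_d)^{d^{n-1}}$ is forced to be everything. The transitivity of $G_{n-1}$ on level $n-1$ (equivalently, irreducibility of $f^{n-1}-x_0$, which condition~(1) supplies) handles the "every coordinate" part; the "normal subgroup of a product containing a transposition in each factor is everything" part is a clean but essential group-theory lemma that must be stated and proved (it is here that $d=2,3$ — where $S_d$ has no normal subgroup sandwiched between transpositions and the whole group except in the harmless $d=3$, $A_3$ case which a transposition already escapes — makes life easy and is exactly why conditions~(1),(3) alone suffice without the analog of condition~(2)). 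Everything else — the Newton polygon irreducibility at $\fp_1$, the discriminant computation, and the inertia-is-a-transposition statement — is either classical or already available as Lemma~\ref{lemma.disc} and Lemma~\ref{lemma.transp}.
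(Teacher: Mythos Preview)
Your overall plan matches the paper's combined proof of Theorems~\ref{thm:biggalois} and~\ref{thm:biggalois23}: induction on $n$, with condition~(1) giving Eisenstein irreducibility of $f^n-x_0$ at $\fp_1$ (hence transitivity at every level), and condition~(3) producing at each stage a prime $\fp$ unramified in $K_{n-1}$ whose inertia contributes a transposition at level $n$. The paper organizes this via two intermediate results, Proposition~\ref{prop:galSd} (each $\Gal(M_i/K(\alpha_i))\cong S_d$) and Proposition~\ref{prop:Gtrans} (the transposition actually lies in $\Gal(K_n/\widehat{M_i})$ with $\widehat{M_i}=\prod_{j\neq i}M_j$), after which $\Gal(K_n/\widehat{M_i})$, being a normal subgroup of $S_d$ containing a transposition, must equal $S_d$.

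You have, however, misidentified the main obstacle. The delicate point is not (d)--(e) but (b)--(c). First, Lemma~\ref{lemma.transp} is stated only for a trinomial, so it applies to $f(x)-\alpha$ over $K(\alpha)$, not directly to $f^n(x)-x_0$ over $K$. More importantly, your assertion in (c) that the resulting inertia element in $G_n$ ``swaps two children of a \emph{single} level-$(n-1)$ vertex'' --- i.e.\ acts trivially on $f^{-1}(\alpha_j)$ for all $j\neq i$ --- is precisely the nontrivial content of Proposition~\ref{prop:Gtrans}, which you have not addressed. One must show that the relevant prime $\fP$ of $K(\alpha_i)$ does not ramify in $\widehat{M_i}$. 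The paper argues that if $\fP$ also ramified over some $\alpha_j$ with $j\neq i$, then both $\alpha_i$ and $\alpha_j$ would be critical values of $f$ modulo a prime $\fQ$ above $\fP$, forcing $\alpha_i\equiv\zeta\alpha_j\pmod{\fQ}$ for a $(d-1)$-th root of unity $\zeta$; iterating $f$ and using $\fp\nmid(d-1)st$ then gives $\alpha_i\equiv\alpha_j\pmod{\fQ}$, contradicting $\fp\nmid\Delta(f^{n-1}-x_0)$. Without this non-collision argument the inertia element could a priori act nontrivially on several fibres simultaneously, and then your conjugation step (d) --- which as written also overreaches, since obtaining ``any pair of siblings'' would require a $2$-transitivity you have not yet established --- would not produce single-coordinate transpositions, and the induction would not close.
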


The proofs of Theorems~\ref{thm:biggalois} and~\ref{thm:biggalois23}
rely on the following two results.

\begin{proposition}
\label{prop:galSd}
Let $d$, $m$,  $x_0=s/t$, $b=u/w$, and $f$ be as in
Theorem~\ref{thm:biggalois} or Theorem~\ref{thm:biggalois23}.
Then for any $n\geq 1$ and $\alpha\in f^{-(n-1)}(x_0)$,
$\Gal(K(f^{-1}(\alpha))/K(\alpha))\cong S_d$.
\end{proposition}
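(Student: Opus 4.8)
The plan is to view $G:=\Gal\big(K(f^{-1}(\alpha))/K(\alpha)\big)$ as a subgroup of $S_d$ acting on the $d$ roots of $f(x)-\alpha$, show that it is transitive, and then either invoke Lemma~\ref{lem:genSd} (in the setting of Theorem~\ref{thm:biggalois}, where $d\geq 3$, $2\leq m$, $d/2<m<d$, and $(m,d)=1$) or argue directly inside $S_2$ or $S_3$ (in the setting of Theorem~\ref{thm:biggalois23}, where $m=1$ and $d\in\{2,3\}$). The three facts needed---transitivity of $G$, a transposition in $G$, and, when $m\geq 2$, a subgroup $H\leq G$ that is transitive on $m$ of the roots and trivial on the other $d-m$---will be supplied by hypotheses~(1), (3), and~(2), respectively, together with the lemmas of Section~\ref{section:discriminant}.

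First, using hypothesis~(1) I would run a Newton polygon argument at $\fp_1$. By induction on $j\geq 0$ one checks that $f^j(x)-x_0$ is irreducible over $K$ and that, for every $\beta\in f^{-j}(x_0)$, the field $K(\beta)$ has a unique prime $\fP_1$ above $\fp_1$, with $\fP_1/\fp_1$ totally ramified of degree $d^j$ and $v_{\fP_1}(\beta)=d^{-j}$: in the inductive step, writing $\beta'=f(\beta)$ and $\fP_1'$ for the prime of $K(\beta')$ above $\fp_1$, the Newton polygon of $f(x)-\beta'=x^d-bx^m-\beta'$ over $K(\beta')_{\fP_1'}$ is a single segment of slope $-d^{-j}$---this uses $v_{\fP_1'}(b)=v_{\fp_1}(b)\geq 1>d^{-(j-1)}(d-m)/d$---so each root of $f(x)-\beta'$ generates a totally ramified degree-$d$ extension. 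Taking $j=n-1$ yields that $f^{n-1}(x)-x_0$ is irreducible over $K$ (so the elements of $f^{-(n-1)}(x_0)$ form a complete set of $K$-conjugates) and that $f(x)-\alpha$ is irreducible over $K(\alpha)$; hence $G$ acts transitively.

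Next I would produce a transposition. Since $f^n(x)-x_0=\prod_{\alpha'\in f^{-(n-1)}(x_0)}(f(x)-\alpha')$ and $\res\big(f(x)-\alpha',f(x)-\alpha''\big)=(\alpha'-\alpha'')^d$, comparing discriminants gives
\[
\Delta(f^n(x)-x_0)=\pm\,[\Delta(f^{n-1}(x)-x_0)]^{d}\,N_{K(\alpha)/K}(\Delta(f(x)-\alpha)).
\]
Let $\fp$ be the prime from hypothesis~(3). Because $v_{\fp}(\Delta(f^{n-1}(x)-x_0))=0$, the prime $\fp$ is unramified in $K(\alpha)$ and $v_{\fp}\big(N_{K(\alpha)/K}(\Delta(f(x)-\alpha))\big)$ is odd, so there is a prime $\fP\mid\fp$ of $K(\alpha)$ with $v_{\fP}(\Delta(f(x)-\alpha))$ odd. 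Moreover $\fP\nmid b\alpha$: we have $v_{\fp}(b)=0$ because $\fp\nmid uw$, and $\alpha$ is a unit at $\fP$ because it is a root of the monic polynomial $f^{n-1}(x)-x_0$ while $v_{\fp}(N_{K(\alpha)/K}(\alpha))=v_{\fp}(\pm x_0)=0$ since $\fp\nmid st$. Applying Lemma~\ref{lemma.disc} over $K(\alpha)$ to $g=f(x)-\alpha$ (with $A=1$, $B=-b$, $C=-\alpha$), the relative discriminant of $K(\alpha)(\theta)/K(\alpha)$, for a root $\theta$ of $g$, has odd---hence positive---valuation at $\fP$; thus $\fP$ ramifies in $K(\alpha)(\theta)$, and therefore in $K(f^{-1}(\alpha))$. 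Lemma~\ref{lemma.transp} then shows that the inertia group $I(\fQ|\fP)\leq G$, for any prime $\fQ$ of $K(f^{-1}(\alpha))$ above $\fP$, is generated by a single transposition.

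In the Theorem~\ref{thm:biggalois23} setting ($m=1$, $d\in\{2,3\}$) this already finishes the proof, since a transitive subgroup of $S_2$ or of $S_3$ that contains a transposition is the full symmetric group. In the Theorem~\ref{thm:biggalois} setting I would obtain the last ingredient by applying Lemma~\ref{lemma.minertia} to the prime $\fp_2$ of hypothesis~(2): its hypotheses are exactly conditions~2(a)--(d) together with the irreducibility of $f^{n-1}(x)-x_0$ established above, and it produces a prime $\fP\mid\fp_2$ of $K(\alpha)$ and a prime $\fQ$ of $K(f^{-1}(\alpha))$ above it such that $H:=I(\fQ|\fP)\leq G$ acts transitively on $m$ of the roots of $f(x)-\alpha$ and fixes the remaining $d-m$. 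Relabeling the roots so that this orbit is $\{1,\dots,m\}$, all three hypotheses of Lemma~\ref{lem:genSd} are satisfied, so $G=S_d$. I expect the transposition step to be the main obstacle: the crucial point is that the \emph{oddness} of $v_{\fp}(\Delta(f^n(x)-x_0))$, together with $\fp$ being unramified in $K(\alpha)$, is exactly what upgrades ``$f(x)-\alpha$ has a repeated root modulo $\fP$'' to genuine ramification of $\fP$ in $K(f^{-1}(\alpha))$---the hypothesis that Lemma~\ref{lemma.transp} needs---while everything else is a routine application of the lemmas of Section~\ref{section:discriminant}.
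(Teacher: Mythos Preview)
Your proof is correct and follows essentially the same architecture as the paper's: transitivity from condition~(1), a transposition from condition~(3) via Lemmas~\ref{lemma.disc} and~\ref{lemma.transp}, the subgroup $H$ from condition~(2) via Lemma~\ref{lemma.minertia}, and then Lemma~\ref{lem:genSd} (or the trivial argument for $d\le 3$). The only notable differences are cosmetic: the paper obtains irreducibility of $f^{n}(x)-x_0$ in one line by observing it is Eisenstein at $\fp_1$ (since $f(x)\equiv x^d\pmod{\fp_1}$) and then invokes Capelli's Lemma, whereas you run an explicit Newton-polygon induction; and for the transposition the paper tracks the critical-value factor in formula~\eqref{eq:iterdisc} and passes through a Galois conjugate, whereas you use the product identity $\Delta(f^n-x_0)=\pm[\Delta(f^{n-1}-x_0)]^{d}\,N_{K(\alpha)/K}\!\big(\Delta(f-\alpha)\big)$ to locate $\fP$ directly---a clean alternative that leads to the same application of Lemmas~\ref{lemma.disc} and~\ref{lemma.transp}.
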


\begin{proof}
For convenience of notation, let $G:=\Gal(K(f^{-1}(\alpha))/K(\alpha))$,
and let $\beta_1,\ldots,\beta_d$ be the roots of $f(x)-\alpha$.

Observe that $f(x)\equiv x^d \pmod{\fp_1[x]}$,
where $\fp_1$ is the prime described in condition~(1) of Theorem~\ref{thm:biggalois}.
Thus, $f^{n}(x)\equiv x^{d^{n}} \pmod{\fp_1[x]}$,
and in addition, the constant term of $f^{n}(x)$ is trivial. Hence,
$f^{n}(x)-x_0$ is Eisenstein at $\fp_1$ and therefore irreducible over $K$,
of degree $d^{n}$.
	
Further, since $f^n(x)-x_0$ is irreducible over $K$,
$f(x)-\alpha$ is irreducible over $K(\alpha)$ by Capelli's Lemma. 
In particular, $G$ acts transitively on $\{\beta_1,\ldots,\beta_d\}$.

Let $\fp$ be the prime described in condition~(3) of Theorem~\ref{thm:biggalois}
for $\Delta(f^{n}(x)-x_0)$.
By equation~\eqref{eq:iterdisc} and the fact that $\fp\nmid stuwd$,
we see that $\fp$ must divide
\[\prod_{i=1}^{d-m} \big( f^{n}(\zeta^i\eta)-x_0\big) =
\prod_{\gamma \in f^{-(n-1)}(x_0)}\left(\prod_{i=1}^{d-m} \big( f(\zeta^i\eta)-\gamma \big) \right)\]
to an odd power, where $\eta$ is a nonzero critical point of $f(x)$,
and $\zeta$ is a primitive $(d-m)$-root of unity.
Hence, there is some prime $\fq'$ of $K_n:=K(f^{-n}(x_0))$ lying above $\fp$,
along with some Galois conjugate $\alpha'$ of $\alpha$, such that $\fq'$ divides
$\prod_{i=1}^{d-m}(f(\zeta^i\eta)-\alpha' )$ to an odd power.
Since $\alpha'$ and $\alpha$ are conjugates,
there must also be a prime $\fq$ lying above $\fp$
dividing  $\prod_{i=1}^{d-m}(f(\zeta^i\eta)-\alpha)$ to an odd power.
Finally, restricting $\fq$ to $K(\alpha)$,
we see that there is a prime $\fP$ of $K(\alpha)$ lying above $\fp$
that divides $\Delta(f(x)-\alpha)$ to an odd power.
Applying Lemma~\ref{lemma.disc} to $\Delta(f(x)-\alpha)$,
the prime $\fP$ must ramify in $K(f^{-1}(\alpha))$.
By Lemma~\ref{lemma.transp}, the corresponding inertia subgroup in $G$
must be generated by a single transposition of the roots $\{\beta_1,\ldots,\beta_d\}$.
	

Thus, $G=\Gal(K(f^{-1}(\alpha))/K(\alpha))$ is a subgroup of $S_d$
that acts transitively on $\{\beta_1,\ldots,\beta_d\}$
and that also contains a transposition.
For $d=2$ or $d=3$, it follows that $G\cong S_d$,
proving the desired result
under the hypotheses of Theorem~\ref{thm:biggalois23}.

For the remainder of the proof,
assume the hypotheses of Theorem~\ref{thm:biggalois}.
Let $\fp_2$ be the prime described in condition~(2) of that Theorem.
Then Lemma~\ref{lemma.minertia} applied to $\fp_2$
shows that $G$ has a subgroup $H$ that acts transitively on $m$ 
of the roots of $f(x)-\alpha$, and trivially on the remaining roots.
Thus, $G$ satisfies the hypotheses of 
Lemma~\ref{lem:genSd}, and hence $G\cong S_d$.
\end{proof}

\begin{proposition}
\label{prop:Gtrans}
Let $d$, $m$,  $x_0=s/t$, $b=u/w$, and $f$ be as in
Theorem~\ref{thm:biggalois} or Theorem~\ref{thm:biggalois23}.
Fix $n\geq 1$, and let  $\alpha_1, \dots, \alpha_{d^{n-1}}$ denote the roots of $f^{n-1}(x)-x_0$.
For each $i=1,\ldots, d^{n-1}$, let $M_i:=K(f^{-1}(\alpha_i))$
and $\widehat{M_i}:=\prod_{j\neq i}M_j$.
Then $\Gal(K(f^{-n}(x_0)) / \widehat{M_i})$
contains an element that acts as a single transposition
on the elements of $f^{-1}(\alpha_i)$.
\end{proposition}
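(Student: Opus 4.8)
The goal is to produce an element of $\Gal(K(f^{-n}(x_0))/\widehat{M_i})$ acting as a single transposition on $f^{-1}(\alpha_i)$, where $\widehat{M_i}=\prod_{j\neq i}M_j$ is the compositum of the "sibling" extensions. The natural strategy is to exhibit a prime of $K(\alpha_i)$ that ramifies in $M_i=K(f^{-1}(\alpha_i))$ but is unramified in every $M_j$ for $j\neq i$, and then read off the inertia generator. First I would invoke condition~(3) of Theorem~\ref{thm:biggalois} at level $n$: there is a prime $\fp\nmid stuwd(d-m)$ with $v_{\fp}(\Delta(f^n(x)-x_0))$ odd but $v_{\fp}(\Delta(f^\ell(x)-x_0))=0$ for all $\ell<n$. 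Arguing exactly as in the proof of Proposition~\ref{prop:galSd}, via formula~\eqref{eq:iterdisc} and the fact that $\fp$ misses $stuwd$, the prime $\fp$ divides $\prod_{\gamma\in f^{-(n-1)}(x_0)}\prod_{r}(f(r)-\gamma)^{m_r}$ to an odd power, where $r$ ranges over the critical points of $f$; hence some prime $\fP$ of $K_n$ above $\fp$ restricts to a prime of $K(\alpha_j)$, for some conjugate $\alpha_j$ of the $\alpha_i$'s, that divides $\Delta(f(x)-\alpha_j)$ to an odd power. Relabeling by a Galois automorphism, I may assume $j=i$: there is a prime $\fP$ of $K(\alpha_i)$ above $\fp$ that divides $\Delta(f(x)-\alpha_i)$ to an odd power, so by Lemma~\ref{lemma.disc} and Lemma~\ref{lemma.transp} the inertia group $I(\fQ|\fP)$ in $\Gal(M_i/K(\alpha_i))$ is generated by a single transposition of $f^{-1}(\alpha_i)$ for any $\fQ$ above $\fP$.

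The crux is to show this same prime is \emph{unramified} in each $M_j$ with $j\neq i$, so that the inertia generator in $\Gal(K_n/K)$, which a priori permutes the whole level-$n$ set of the tree, in fact fixes every vertex outside the subtree above $\alpha_i$ — equivalently, it lies in $\Gal(K_n/\widehat{M_i})$. Suppose $\fP$ lay below a ramified prime of some $M_j$. Since $v_{\fp}(\Delta(f^{n-1}(x)-x_0))=0$, the extension $K(\alpha_1,\ldots,\alpha_{d^{n-1}})=K_{n-1}$ is unramified over $K$ at $\fp$, so all the $\alpha_j$ are $\fp$-integral at the relevant primes and the $\fP$ (for each $j$) are unramified over $\fp$. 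A prime of $K(\alpha_j)$ ramifying in $M_j$ would then divide $\Delta(\cO_{M_j}/\cO_{K(\alpha_j)})$, hence (up to a square) $\Delta(f(x)-\alpha_j)$; taking the norm down to $K$ and using equation~\eqref{eq:iterdisc} relating $\Delta(f^n(x)-x_0)$ to $\prod_j \Delta(f(x)-\alpha_j)$ together with $[\Delta(f^{n-1}(x)-x_0)]^d$, one sees that a prime of $\cO_K$ dividing $\Delta(f^n(x)-x_0)$ but not $\Delta(f^{n-1}(x)-x_0)$ must divide $\prod_j\Delta(f(x)-\alpha_j)$ — but I need that the odd-valuation contribution is concentrated above a \emph{single} conjugate $\alpha_i$. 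This is where the oddness hypothesis is essential: if $\fp$ divided $\Delta(f(x)-\alpha_j)$ to a positive power for two distinct conjugates $\alpha_{i}$, $\alpha_{j}$, then by Galois symmetry (the decomposition group of $\fp$ in $\Gal(K_{n-1}/K)$ permuting the conjugates) the total valuation $v_{\fp}(\Delta(f^n(x)-x_0))$ would pick up an even contribution from each Galois orbit of size $\geq 2$ among the "bad" $\alpha_j$'s — forcing the bad locus to be a single $K$-rational-behaving orbit, and combined with oddness, to consist of a single conjugate in the decomposition-group orbit. I expect this orbit-counting/parity argument to be the main obstacle: making precise that "odd valuation" plus "Galois action on conjugates" localizes the ramification to exactly one $\alpha_i$ up to relabeling.

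Granting that, the proof concludes quickly. Fix $\fP$ as above, lying over $\fp$, ramified in $M_i$ and unramified in every $M_j$, $j\neq i$. Pick a prime $\fQ$ of $K_n$ above $\fP$, and let $\sigma$ generate the inertia group $I(\fQ|\fp)\subseteq\Gal(K_n/K)$. Because $\fp$ is unramified in $K_{n-1}$, $\sigma$ fixes $K_{n-1}$ pointwise, so in particular $\sigma$ fixes each $\alpha_j$; and because $\fP$ (the restriction of $\fQ$ to $K(\alpha_j)$) is unramified in $M_j$ for $j\neq i$, the image of $\sigma$ in $\Gal(M_j/K(\alpha_j))$ is trivial, i.e. $\sigma$ fixes $f^{-1}(\alpha_j)$ pointwise for every $j\neq i$. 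Thus $\sigma\in\Gal(K_n/\widehat{M_i})$. Finally, the image of $\sigma$ in $\Gal(M_i/K(\alpha_i))$ is the inertia generator $I(\fQ|\fP)$, which by Lemma~\ref{lemma.transp} is a single transposition of $f^{-1}(\alpha_i)$. Hence $\sigma$ is the desired element, completing the proof.
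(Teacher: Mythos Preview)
Your overall architecture matches the paper's: locate a prime $\fP$ of $K(\alpha_i)$ above the prime $\fp$ of condition~(3), use Lemmas~\ref{lemma.disc} and~\ref{lemma.transp} to see that its inertia in $\Gal(M_i/K(\alpha_i))$ is a transposition, show $\fP$ is unramified in $\widehat{M_i}$, and then read off the inertia element inside $\Gal(K_n/\widehat{M_i})$. The first and last paragraphs are essentially correct.

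The gap is exactly where you suspected: the ``orbit-counting/parity'' argument does not establish that $\fP$ is unramified in $M_j$ for all $j\neq i$. Passing to a prime $\fQ$ of $K_{n-1}$ above $\fp$, the iterative discriminant formula gives that $\sum_{j} v_{\fQ}\big(\Delta(f(x)-\alpha_j)\big)$ is odd; decomposition-group symmetry only says that each $D(\fQ|\fp)$-orbit $O$ contributes $|O|\cdot v_O$ to this sum. An odd total does not force a unique bad index: one term of valuation $1$ and another of valuation $2$, or three terms each of valuation $1$, are perfectly consistent with oddness. In any such scenario the inertia generator $\sigma$ would act as a transposition on $f^{-1}(\alpha_i)$ \emph{and} nontrivially on some $f^{-1}(\alpha_j)$ with $j\neq i$, so $\sigma\notin\Gal(K_n/\widehat{M_i})$, and the construction collapses.

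The paper closes this gap not with parity but with the specific trinomial shape of $f(x)=x^d-bx^m$. If $\fQ$ ramifies in both $K_{n-1}M_i$ and $K_{n-1}M_j$, then $\alpha_i$ and $\alpha_j$ are both critical values of $f$ modulo $\fQ$; since $\fp\nmid stuw$ they are nonzero, so $\alpha_i\equiv f(\eta)$ and $\alpha_j\equiv f(\xi)$ where $\eta^{d-m}\equiv\xi^{d-m}\equiv mb/d$. Thus $\eta\equiv\zeta\xi$ for some $(d-m)$-th root of unity $\zeta$, and the identity $f(\zeta y)=\zeta^m f(y)$ (valid since $\zeta^{d-m}=1$) gives $\alpha_i\equiv\zeta^m\alpha_j$; iterating, $x_0\equiv\zeta^{m^n}x_0\pmod{\fQ}$. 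As $\fp\nmid s$ and $\fp\nmid(d-m)$ with $\gcd(m,d-m)=1$, this forces $\zeta\equiv 1$, hence $\alpha_i\equiv\alpha_j\pmod{\fQ}$, contradicting $\fp\nmid\Delta(f^{n-1}(x)-x_0)$. This algebraic argument, exploiting that the nonzero critical values of $f$ lie in a single multiplicative $\mu_{d-m}$-orbit, is what you need in place of the parity heuristic.
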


\begin{proof}
Fix $i\in \{1,\ldots, d^{n-1}\}$.
Let $\fp$ be the prime described in condition~(3) of Theorem~\ref{thm:biggalois}
for $\Delta(f^n(x)-x_0)$.
As in the proof of Proposition~\ref{prop:galSd},
there must be a prime $\fP$ of $K(\alpha_i)$ lying above $\fp$
that divides $\Delta(f(x)-\alpha_i)$ to an odd power.
By Lemma~\ref{lemma.disc} applied to $\Delta(f(x)-\alpha_i)$,
the prime $\fP$ must ramify in $M_i=K(f^{-1}(\alpha_i))$;
and by Lemma~\ref{lemma.transp}, the inertia group of $\fP$ in
$\Gal(M_i/K(\alpha_i))$ is generated by an element that acts as a transposition
on $f^{-1}(\alpha_i)$.

In addition, because $\fP$ lies over $\fp$, with $\fp\nmid \Delta(f^{n-1}(x)-x_0)$,
the prime $\fP$ does not ramify in $K_{n-1}:=K(f^{-(n-1)}(x_0))$. 
It suffices to show that $\fP$ does not ramify in $\widehat{M}_i$.
Indeed, in that case, the inertia group of $\fP$ in $\Gal(\widehat{M}_i / K(\alpha_i))$
is trivial. 
	
Suppose that there is some $j\neq i$ and some prime $\fQ$ of $K_{n-1}$
with $\fQ | \fP$ and which ramifies in $K_{n-1} M_j$. Then $\alpha_j$
is a critical value of $f$ modulo $\fQ$; but so is $\alpha_i$,
since $\fP$ ramifies in $M_i$.
If either $\alpha_i$ or $\alpha_j$ is congruent to $0$ or $\infty$ modulo~$\fQ$,
then $x_0=f^{n-1}(\alpha_i)=f^{n-1}(\alpha_j)$ must also be congruent 
to $0$ or $\infty$ modulo~$\fQ$, and hence $\fQ\mid stuw$, contradicting
the assumption that $\fp\nmid stuw$. Thus, $\alpha_i$ and $\alpha_j$
must be of the form $f(\eta)$ and $f(\xi)$, respectively,
where $\eta,\xi$ satisfy
\[ \eta^{d-m}\equiv \xi^{d-m}\equiv \frac{mb}{d}\pmod{\fQ} . \]
Therefore, as in the proof of Lemma~\ref{lemma.transp}, there is a
$(d-m)$-th root of unity $\zeta$ so that
$\eta\equiv \zeta\xi \pmod{\fQ}$, and hence
\[ \alpha_i \equiv f(\eta) \equiv \zeta^m f(\xi) \equiv \zeta^m \alpha_j \pmod{\fQ}. \]
Applying $f^{n-1}$, we have
\[ x_0 = f^{n-1}(\alpha_i) \equiv \zeta^{m^n} f^{n-1}(\alpha_j)
= \zeta^{m^n} x_0 \pmod{\fQ} .\]
Since $\fp\nmid (d-m)$, it follows that $\zeta^{m^n}\equiv 1 \pmod{\fQ}$.
Therefore, because $(m,d-m)=1$, we have $\zeta\equiv 1 \pmod{\fQ}$,
and hence $\alpha_i\equiv\alpha_j \pmod{\fQ}$.
But in that case, $f^{n-1}(x)-x_0$ has  multiple roots modulo $\fQ$,
yielding
\[ \fp | \Delta(f^{n-1}(x)-x_0), \]
which is a contradiction.
Thus, $\fP$ does not ramify in $K_{n-1} M_j$ for any $j\neq i$.
Taking the compositum, $\fP$ does not ramify in $\widehat{M_i}$.
\end{proof}

\begin{proof}[Proof of Theorems~\ref{thm:biggalois} and~\ref{thm:biggalois23}]
We proceed by induction on $n$. The conclusion is trivial for $n=0$.
Assuming it holds for $n-1$, we have in particular
that $f^{n-1}(x)-x_0$ is irreducible over $K$,
with roots $\alpha_1, \dots, \alpha_{d^{n-1}}$.
For each $i=1,\ldots, d^{n-1}$, we claim that $\Gal(K_n/\widehat{M_i})\cong S_d$,
where $K_n:=K(f^{-n}(x_0))$, and $\widehat{M_i}$ is
as in Proposition~\ref{prop:Gtrans}.
	
To prove the claim, let $M_i$ be as in Proposition~\ref{prop:Gtrans},
and note that
\begin{equation}
\label{eq:galisom}
\Gal(K_n/\widehat{M_i})\cong \Gal(M_i/\widehat{M_i}\cap M_i),
\end{equation}
where the isomorphism is not just of abstract groups, but of subgroups of $S_d$
acting on $f^{-1}(\alpha_i)$.

Since $M_i$ and $\widehat{M_i}$ are both Galois extensions of $K(\alpha_i)$,
their subfield $M_i\cap \widehat{M_i}$ is also Galois over $K(\alpha_i)$.
Hence, $\Gal(M_i/\widehat{M_i}\cap M_i)$
is a normal subgroup of $\Gal(M_i/K(\alpha_i))$,
which is isomorphic to $S_d$, by Proposition~\ref{prop:galSd}.
On the other hand, by Proposition~\ref{prop:Gtrans},
the isomorphic group $\Gal(K_n/\widehat{M_i})$ contains a transposition.
By equation~\eqref{eq:galisom},
$\Gal(K_n/\widehat{M_i})$
is a normal subgroup of $S_d$ that contains a transposition,
and therefore it is all of $S_d$, as claimed.
	
Thus, for each $i=1,\ldots, d^{n-1}$, we see that $\Gal(K_n/K_{n-1})$
contains a subgroup $H_i$ isomorphic to $S_d$
and which acts trivially on $f^{-1}(\alpha_j)$ for each $j\neq i$.
It follows that $\Gal(K_n/K_{n-1})$ contains a subgroup $H:=\prod_i H_i$ of order $(d!)^{d^{n-1}}$.
Hence, $\Gal(K_n/K)$ is isomorphic to a subgroup of $[S_d]^n$ of order at least
\[ \big| \Gal(K_{n-1}/K) \big| \cdot | H |
= \big| [S_d]^{d^{n-1}} \big| (d!)^{d^{n-1}}
= (d!)^{1 + d + \cdots + d^{n-2} + d^{n-1}} = \big| [S_d]^n \big|. \]
Therefore, $\Gal(K_n/K) \cong [S_d]^n$, as desired.
\end{proof}

\section{Proof of Odoni's Conjecture for $d$ even}\label{section:even}

We now prove Theorem~\ref{thm:maintheorem} for even degree $d$:

\begin{theorem}
\label{thm:monicevenodoni}
Let $K$ be a number field, and let $d\geq 2$ be an even integer.
Then there is a monic polynomial $f(x)\in K[x]$ of degree $d$
and a rational point $x_0\in K$ such that for all $n\geq 0$,
\[ \Gal \Big( K\big( f^{-n}(x_0) \big) / K \Big) \cong [S_d]^n .\]
\end{theorem}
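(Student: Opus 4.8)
The plan is to reduce Theorem~\ref{thm:monicevenodoni} to an application of Theorem~\ref{thm:biggalois} (or Theorem~\ref{thm:biggalois23} when $d=2$) by exhibiting explicit data $b,x_0\in K$ for which the polynomial $f(x)=x^d-bx^m$ satisfies hypotheses (1)--(3). Since $d$ is even, I want $m$ odd with $d/2<m<d$ and $(m,d)=1$; the natural choice is $m=d-1$, which is always odd and coprime to $d$, and satisfies $d-m=1$, so condition (2a) ($\fp_2\nmid(d-m)$) and the divisibility $(d-m)\mid v_{\fp_2}(b)$ in (2c) become vacuous. (For $d=2$ we have $m=1$ and use Theorem~\ref{thm:biggalois23} instead, with conditions (1) and (3) only.) So the real work is: choose primes $\fp_1,\fp_2$ of $\cO_K$ and values $b,x_0$ realizing the local conditions, and then verify the family of conditions~(3) for all $n\geq1$.

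First I would pick two distinct primes $\fp_1\neq\fp_2$ of $\cO_K$, both lying over rational primes that avoid a fixed bad set (so that $\fp_i\nmid m\,d\,(d-m)$ and $\fp_i\nmid 2$), together with a third auxiliary prime $\fp_0$ to be used for condition~(3); let $\pi_1,\pi_2$ be uniformizers. Then set, for instance, $x_0=\pi_1\pi_2^{-1}u$ and $b=\pi_1\pi_2^{-2}$ (times a suitable unit to make $\gcd$ conditions work), so that $v_{\fp_1}(x_0)=1$, $v_{\fp_1}(b)=1\geq1$ giving condition~(1); and $v_{\fp_2}(b)=-2<\min\{v_{\fp_2}(x_0),0\}=-1$, with $v_{\fp_2}(x_0/b)=1$ so $\gcd(m,v_{\fp_2}(x_0/b))=1$, giving condition~(2). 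One must be mildly careful that $x_0,b$ have the prescribed valuations at $\fp_1,\fp_2$ and non-negative valuation outside a controlled finite set; this is elementary using weak approximation / CRT in $\cO_K$, possibly adjusting by units.

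The main obstacle, and the heart of the argument, is condition~(3): for every $n\geq1$ producing a prime $\fp\nmid stuwd(d-m)$ with $v_\fp(\Delta(f^n(x)-x_0))$ odd and positive while $v_\fp(\Delta(f^\ell(x)-x_0))=0$ for all $\ell<n$. Following Looper's strategy, I would use the iterated discriminant formula~\eqref{eq:iterdisc}: writing $\Delta(f^{n}(x)-x_0)$ as (a power of) $\Delta(f^{n-1}(x)-x_0)$ times $\tilde A^{d^{n-1}}\prod_{f'(r)=0}(f^{n}(r)-x_0)^{m_r}$, the "new" primes at level $n$ are controlled by the factor $\prod_{i=1}^{d-m}(f^{n}(\zeta^i\eta)-x_0)$ coming from the nonzero critical points (the critical point $0$ contributes only $C^{m-1}$-type terms supported at $\fp_1$). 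The task is to find a prime $\fp$ dividing this critical-orbit value to an exact odd power and not dividing any earlier discriminant. Here I would argue by a careful valuation analysis: choose the auxiliary prime data so that one can track $v_\fp$ of the critical value $f^n(\eta)-x_0$ along the orbit and force it to be a new prime appearing to odd multiplicity — e.g. by making the critical value have a component that is "squarefree away from controlled primes", or by a counting/parity argument showing that not all primes in this quantity can appear to even order and lie in earlier discriminants. This step will require the bulk of the estimates and is where I expect the genuine difficulty to lie; the rest is bookkeeping to check that the chosen $f$ is monic after the change of variables $f(x)=x^d-bx^m$ (which is already monic) and that Theorem~\ref{thm:biggalois} applies verbatim to conclude $\Gal(K(f^{-n}(x_0))/K)\cong[S_d]^n$ for all $n\geq0$.
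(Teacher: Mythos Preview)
Your reduction to Theorem~\ref{thm:biggalois} (or~\ref{thm:biggalois23} when $d=2$) with $m=d-1$ matches the paper exactly, and your verification of conditions~(1) and~(2) via prescribed valuations at two primes is fine. The genuine gap is condition~(3), which you correctly flag as the heart of the matter but do not actually address.

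The difficulty is that condition~(3) must hold for \emph{every} $n\geq 1$, while $b$ and $x_0$ are fixed once and for all. Your suggestions---impose squarefreeness, or run a counting/parity argument---cannot handle infinitely many $n$: you have only finitely many free parameters, and there is no a~priori reason the numerators of $f^n(\eta)-x_0$ should fail to be squares (times old primes) for all $n$. What is needed is a \emph{single} prime $\fp$ and a structural reason why the relevant critical-orbit quantity $F_n$ is a nonsquare modulo~$\fp$ for every $n$ simultaneously.

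The paper supplies exactly this, via two ideas your proposal lacks. First, it does not choose $b$ and $x_0$ independently: it sets $b=x_0^d/(x_0^{d-1}+1)$, so that $f(x_0)=b$ and $f(b)=0$. This preperiodicity is what makes Lemma~\ref{lem:newpmonic} go through (a prime dividing $F_n$ cannot divide any earlier $F_\ell$, since $f^\ell(\eta)\equiv x_0$ would force $f^n(\eta)\equiv 0$ or $b$, neither congruent to $x_0$), and it gives $b$ the specific denominator $D=s^{d-1}+t^{d-1}$. Second, a Chebotarev argument produces a prime $\fp$ of $\cO_K$ at which every unit of $\cO_K^\times$ is a square and so is $1-d$; one then arranges via CRT that $v_\fp(D)=1$ and that $s$ is a nonsquare mod~$\fp$. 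An explicit induction (equation~\eqref{eq:fnetamod}) shows $(dtD)^{d^n}f^n(\eta)$ is a nonzero square mod~$\fp$ for all $n$, whence $sF_n$ is a square mod~$\fp$ and hence $F_n$ is not. Since all units are squares mod~$\fp$, no unit multiple $uF_n$ is a square in $K$, so $F_n$ has a prime factor of odd multiplicity, yielding condition~(3). Your generic choice $b=\pi_1\pi_2^{-2}$ has no coupling to $x_0$, so neither the preperiodicity argument nor the congruence identity is available, and the proof does not close.
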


Before proving Theorem~\ref{thm:monicevenodoni}, we need one more lemma.

\begin{lemma}
\label{lem:newpmonic}
Let $d\geq 2$ be an integer, let $K$ be a number field with ring of integers $\cO_K$,
and let $s,t \in \cO_K$. Suppose that $s(d-1)$ and $dt(s^{d-1} + t^{d-1})$
are relatively prime.
Let $x_0=s/t$ and
\[ f(x) = x^d - b x^{d-1} \in K[x], \quad\text{where }
b= \frac{x_0^d}{x_0^{d-1} + 1} = \frac{s^d}{t(s^{d-1}+t^{d-1})} .\]
Let $\eta = (d-1)b/d$ be the unique nonzero critical point of $f$.
Then for every $n\geq 1$,
\begin{equation}
\label{eq:newfactormon}
F_n := s^{-1} \Big( d t \big( s^{d-1} + t^{d-1} \big)\Big)^{d^n}  \big[ f^n(\eta) - x_0 \big]
\end{equation}
is an $\cO_K$-integer relatively prime to $d(d-1)st(s^{d-1}+t^{d-1})$.
Moreover, if $uF_n$ is not a square in $K$ for any unit $u\in\cO_K^{\times}$,
then there is a prime $\fq\nmid d(d-1)st(s^{d-1}+t^{d-1})$ of $\cO_K$ dividing
$\Delta(f^n(x)-x_0)$ to an odd power, and such that
$v_{\fq}\big(\Delta( f^{\ell}(x)-x_0)\big)=0$ for all $0\leq \ell < n$.
\end{lemma}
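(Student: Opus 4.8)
The plan is to prove the two assertions of Lemma~\ref{lem:newpmonic} in turn: first that $F_n$ is an $\cO_K$-integer coprime to $d(d-1)st(s^{d-1}+t^{d-1})$, and then the implication concerning the existence of a suitable prime $\fq$.

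First I would establish integrality and coprimality of $F_n$. Writing $f(x) = x^d - bx^{d-1}$ with $b = s^d/\big(t(s^{d-1}+t^{d-1})\big)$, set $N := dt(s^{d-1}+t^{d-1})$. I would clear denominators: since $v_{\fp}(b) \geq 0$ for every prime $\fp \nmid N$, iterating $f$ keeps $\fp$-integrality, so the only primes that can appear in denominators of $f^n(\eta)-x_0$ are those dividing $N$ or $s$ or $t$. A direct Newton-polygon or valuation bookkeeping at each prime $\fp \mid N$ then shows that the power of $\fp$ in the denominator of $f^n(\eta)-x_0$ is at most $d^n \cdot v_{\fp}(N)$ (each application of $f$ multiplies the denominator by at most $d^n$ in the relevant sense, because $\eta$ and its iterates have controlled valuation), so multiplying by $N^{d^n}$ clears it; the extra factor $s^{-1}$ is justified by checking that $s \mid f^n(\eta)-x_0$ (using $f(x) \equiv x^d \pmod{s}$ appropriately and $x_0 = s/t$). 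For coprimality, I would argue prime-by-prime: at a prime $\fp \mid d$, at a prime $\fp \mid (d-1)$ (note $\eta = (d-1)b/d$, so the critical value $f(\eta)$ has a controlled shape), at $\fp \mid s$, at $\fp \mid t$, and at $\fp \mid (s^{d-1}+t^{d-1})$, one computes $F_n$ modulo $\fp$ and checks it is a nonzero residue, using the hypothesis that $s(d-1)$ is coprime to $dt(s^{d-1}+t^{d-1})$ to rule out simultaneous divisibility. This is the most computation-heavy part but is routine once the right reductions mod $\fp$ are written down; I expect this to be the main obstacle, purely in terms of careful case analysis.

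Next, for the implication, suppose $uF_n$ is not a square in $K$ for any unit $u \in \cO_K^\times$. I would apply the iterated-discriminant formula~\eqref{eq:iterdisc} repeatedly to express $\Delta(f^n(x)-x_0)$ as a product of a unit times $d$-th and square powers of earlier discriminants, times $\prod_{f'(r)=0}(f^n(r)-x_0)^{m_r}$; since $f(x) = x^d - bx^{d-1}$ has critical points $0$ (of multiplicity $d-2$) and $\eta$ (of multiplicity $1$), and $f(0)=0$ so $f^n(0)-x_0 = -x_0$, the only genuinely new factor at level $n$ contributing an odd power is $f^n(\eta)-x_0$, i.e., up to squares and $d$-th powers and factors supported on $stuwd(d-1)$, we have $\Delta(f^n(x)-x_0) \equiv (\text{unit})\cdot F_n \pmod{\text{squares}}$ away from the bad primes. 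Hence if no unit multiple of $F_n$ is a square, there is a prime $\fq \nmid d(d-1)st(s^{d-1}+t^{d-1})$ with $v_{\fq}(F_n)$ odd, which forces $v_{\fq}(\Delta(f^n(x)-x_0))$ odd.

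Finally I would verify the "freshness" condition $v_{\fq}(\Delta(f^{\ell}(x)-x_0))=0$ for $0 \leq \ell < n$. Here the key point is that $F_n$ is coprime to $F_\ell$ for $\ell < n$ (or more precisely, that $\fq \nmid \Delta(f^{\ell}(x)-x_0)$): using~\eqref{eq:iterdisc} again, $\Delta(f^{\ell}(x)-x_0)$ is, up to the bad primes $d(d-1)st(s^{d-1}+t^{d-1})$, a product of terms $f^{j}(\eta)-x_0$ and $f^j(0)-x_0 = -x_0$ for $j \leq \ell < n$. If $\fq$ divided $\Delta(f^{\ell}(x)-x_0)$, then $\fq \mid f^j(\eta)-x_0$ for some $j < n$, i.e. $\eta$ is a preperiodic-type collision modulo $\fq$; but $\fq \mid F_n$ means $\fq \mid f^n(\eta)-x_0$, i.e. $f^n(\eta) \equiv x_0 \pmod{\fq}$, and combining $f^j(\eta) \equiv x_0$ with $f^n(\eta) \equiv x_0$ gives $f^n(\eta) \equiv f^j(\eta)$, which after applying $f$ suitably would force $x_0 = f^{n}(\eta) - x_0$-type relations incompatible with $\fq \nmid stuw$ — more concretely, I would show $\fq \mid \Delta(f^n(x)-x_0)$ together with $\fq \mid \Delta(f^{\ell}(x)-x_0)$ leads, via the critical-point analysis exactly as in Proposition~\ref{prop:Gtrans}, to $x_0 \equiv 0$ or $\infty \pmod{\fq}$, contradicting coprimality of $\fq$ with $st$. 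This last step reuses the mod-$\fq$ collision argument of Lemma~\ref{lemma.transp} and Proposition~\ref{prop:Gtrans} almost verbatim, so it should be short.
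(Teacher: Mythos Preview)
Your overall architecture matches the paper's, but there are two weaknesses worth flagging.

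\textbf{Integrality and coprimality.} Your plan to do valuation bookkeeping prime-by-prime would eventually work, but the paper avoids all the casework by proving a single explicit inductive formula
\[
(dtD)^{d^n} f^n(\eta) = s^{d e_n}(d-1)^{(d-1)^n} M_n,
\qquad e_n = 1 + (d-1) + \cdots + (d-1)^n,
\]
with $M_n \in \cO_K$ coprime to $d(d-1)stD$, proven by induction from the base case $f(\eta) = -(d-1)^{d-1}(b/d)^d$. Substituting into the definition of $F_n$ gives $F_n$ as an explicit difference of two terms, each visibly integral and together visibly coprime to $d(d-1)stD$. This replaces five separate prime analyses by one computation, and you should do it this way.

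\textbf{The freshness argument.} Here there is a genuine gap. Your appeal to Proposition~\ref{prop:Gtrans} is misplaced: that argument is about two \emph{distinct} roots $\alpha_i,\alpha_j$ of $f^{n-1}(x)-x_0$ colliding modulo a prime of an extension field, and concludes $\alpha_i\equiv\alpha_j$; it does not address a single critical orbit hitting $x_0$ twice. What actually makes the freshness step work is the very specific choice of $b$: one checks directly that
\[
f(x_0) = b \quad\text{(exactly, not just modulo anything)}, \qquad f(b)=0, \qquad f(0)=0.
\]
So if $f^{\ell}(\eta)\equiv x_0\pmod{\fq}$ for some $\ell<n$, then $f^{\ell+1}(\eta)\equiv b$ and $f^{\ell+j}(\eta)\equiv 0$ for $j\ge 2$; hence $f^n(\eta)\equiv b$ or $0\pmod{\fq}$. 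But $\fq\mid F_n$ forces $f^n(\eta)\equiv x_0$, and $x_0\not\equiv 0,b\pmod{\fq}$ because $x_0-b=-st^{d-2}/D$ is a $\fq$-unit. This is the missing idea; without the identity $f(x_0)=b$ your sketch does not close.
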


\begin{proof}
Let $D=s^{d-1} + t^{d-1}$.
We claim that for any $n\geq 1$,
\begin{equation}
\label{eq:inducfn}
(dtD)^{d^n}f^n(\eta) = s^{de_n} (d-1)^{(d-1)^n} M_n
\end{equation}
where $M_n\in\cO_K$ is relatively prime to $d(d-1)stD$,
and where
\[ e_n=(d-1)^n + (d-1)^{n-1} + \cdots + (d-1) + 1. \]

Proceeding by induction on $n$, a direct computation shows
\begin{equation}
\label{eq:f1eta}
f(\eta) = - (d-1)^{d-1} \Big( \frac{b}{d} \Big)^d
= \frac{ - (d-1)^{d-1} s^{d^2} }{\big(dtD\big)^d},
\end{equation}
proving the claim for $n=1$, with $M_1=-1$.
Given the claim for a particular $n\geq 1$, we have
\[ \frac{(dtD)^{d^{n+1}} f^{n+1}(\eta)}{(d-1)^{(d-1)^{n+1}} s^{de_{n+1}} M_n^{d-1}}
= (d-1)^{(d-1)^{n}} s^{d(e_n - 1)} M_n - d^{d^n}(tD)^{d^n - 1}, \]
since $e_{n+1} = (d-1)e_n + 1$.
Setting
\[ M_{n+1}:= M_n^{d-1}\Big( (d-1)^{(d-1)^{n}} s^{d(e_n - 1)} M_n - d^{d^n}(tD)^{d^n - 1} \Big)
\in \cO_K, \]
we see that $M_{n+1}$ is relatively prime to $d(d-1)stD$, proving the claim.

Fix $n\geq 1$. It is immediate from equations~\eqref{eq:newfactormon}
and~\eqref{eq:inducfn} that
\[ F_n = s^{de_n-1} (d-1)^{(d-1)^n} M_n  - d^{d^n} ( t D)^{d^n-1} \in\cO_K, \]
which is relatively prime to $d(d-1)stD$, as desired.

For the remainder of the proof, assume that $uF_n$ is not a square in $\cO_K$
for any unit $u\in\cO_K^{\times}$.
Then there is a prime $\fq$ of $\cO_K$ dividing $F_n$ to an odd power.
We must have $\fq \nmid d(d-1)stD$.

The factor consisting of
the product over critical points in discriminant formula~\eqref{eq:iterdisc}
for $\Delta(f^n(x)-x_0)$ is
\[ (-x_0)^{d-2} (f^n(\eta) - x_0) =
\frac{-s^{d-1} F_n}{ t^{d^n + d -2} (d D)^{d^n}} . \]
Thus, $\fq$ divides this factor to an odd power.
By formula~\eqref{eq:iterdisc}, then, it suffices to show that 
\begin{equation}
\label{eq:nopDeltamon}
v_{\fq}\Big( \Delta\big( f^{\ell}(x) - c \big) \Big) = 0
\quad\text{for all} \quad 0\leq \ell \leq n-1 .
\end{equation}
Suppose not. Let $0\leq \ell \leq n-1$ be the smallest index for which
equation~\eqref{eq:nopDeltamon} fails. Then by formula~\eqref{eq:iterdisc} again,
we must have
\[ v_{\fq} \big( f^{\ell}(\eta) - x_0 \big) \geq 1,
\quad\text{i.e.,} \quad
f^{\ell}(\eta) \equiv x_0 \pmod{\fq} . \]
Thus,
\[ f^{\ell + 1}(\eta) \equiv b \pmod{\fq},
\quad\text{and}\quad
f^{\ell + j}(\eta) \equiv 0 \pmod{\fq}
\quad \text{for all} \quad j\geq 2. \]
Therefore, $f^n(\eta)$ is congruent to either $b$ or $0$ modulo $\fq$.
However, $f^n(\eta)\equiv x_0 \pmod{\fq}$, and $x_0\not\equiv b,0\pmod{\fq}$,
since $x_0-b = -st^{d-2}/D$ and $\fq\nmid stD$.
This contradiction proves equation~\eqref{eq:nopDeltamon} and hence the Lemma.
\end{proof}

\begin{proof}[Proof of Theorem~\ref{thm:monicevenodoni}]
Fix $d\geq 2$ even. 
It suffices to show that there is some $f(x)\in K[z]$ satisfying the hypotheses
of Theorem~\ref{thm:biggalois} or~\ref{thm:biggalois23}, with $\deg(f)=d$.

\textbf{Step~1}. We will show that there is a prime $\fp$ of $\cO_K$
such that all units $u\in\cO_K^{\times}$ are squares modulo $\fp$,
and so is $1-d$, with $\fp\nmid d(d-1)$.
To do so, let $u_1,\ldots,u_r$ be generators of the 
(finitely-generated) unit group $\cO_K^{\times}$. It suffices to find a prime $\fp\nmid d(d-1)$
for which each of $1-d,u_1,\ldots, u_r$ is a square modulo $\fp$.

Let $L=K(\sqrt{1-d},\sqrt{u_1},\ldots,\sqrt{u_r})$, which is a Galois extension of $K$.
By the Chebotarev Density Theorem, there are infinitely many primes $\fp$ of $\cO_K$
at which Frobenius acts trivially on $L$ modulo $\fp$. Choosing any such prime $\fp$
that does not ramify in $L$ and does not divide $d(d-1)$,
it follows that each of $1-d,u_1,\ldots,u_r$ have square roots modulo $\fp$, as desired.

\textbf{Step~2}.
Let $\fp_1$ be a prime of $\cO_K$ not dividing $d(d-1)\fp$.
Choose $s_0\in \cO_K$ that is not a square modulo $\fp$, and choose
$s_1\in \cO_K$ with $v_{\fp_1}(s_1)=1$.
Since the three ideals $d(d-1)$, $\fp$, and $\fp_1^2$ are pairwise relatively prime,
the Chinese Remainder Theorem shows that there is some $s\in\cO_K$ such that
\[ s\equiv 1 \pmod{d(d-1)},
\quad s\equiv s_0 \pmod{\fp},
\quad\text{and}\quad s\equiv s_1 \pmod{\fp_1^2}. \]
Thus, $s$ is not a square modulo $\fp$, and $v_{\fp_1}(s)=1$.

Choose $t_0\in\cO_K$ with $v_{\fp}(t_0)=1$.
Since the ideals $s(d-1)$ and $\fp^2$ are relatively prime,
the Chinese Remainder Theorem shows that there is some $t\in\cO_K$ such that
\[ t\equiv 1 \pmod{s(d-1)},
\quad\text{and}\quad t\equiv t_0 - s \pmod{\fp^2}. \]
In particular,
\begin{align*}
s^{d-1} + t^{d-1} & \equiv s^{d-1} + (t_0 - s)^{d-1}
\equiv s^{d-1} + \big(-s^{d-1} + (d-1) s^{d-2}t_0\big)
\\
&\equiv (d-1) s^{d-2} t_0 \pmod{\fp^2},
\end{align*}
and therefore $v_{\fp}(s^{d-1} + t^{d-1}) = v_{\fp}(t_0)= 1$.
In addition,
\[ s^{d-1} + t^{d-1} \equiv 2 \pmod{d-1}, \]
and hence $d-1$ and $s^{d-1}+t^{d-1}$ are relatively prime,
since $d-1$ is odd.

Let $\fp_2=\fp$.
By our choices above, note that $s$ and $dt$ are also relatively prime,
and so are $t$ and $d-1$. Thus,
\[ s(d-1) \quad\text{and}\quad dt(s^{d-1} + t^{d-1})
\quad\text{are relatively prime} \]
as elements of $\cO_K$. Hence,
setting $x_0=s/t$ and $b=s^d/(t(s^{d-1} + t^{d-1}))$, we have
\begin{equation}
\label{eq:bx0vals}
v_{\fp_1}(x_0)=1, \quad v_{\fp_1}(b) = d, \quad
v_{\fp_2}(x_0) = 0, \quad\text{and}\quad v_{\fp_2}(b) = -1.
\end{equation}
Define $f\in K[x]$ by $f(x) = x^d - bx^{d-1}$.

\textbf{Step 3}.
Let $m=d-1$.
If $d=2$, then $f$ is of the form of Theorem~\ref{thm:biggalois23},
and the relations~\eqref{eq:bx0vals}
show that $f$ satisfies condition~(1) of Theorem~\ref{thm:biggalois}.
Similarly, if $d\geq 4$, then
$f$ is of the form of Theorem~\ref{thm:biggalois},
and relations~\eqref{eq:bx0vals}
show that $f$ satisfies conditions~(1) and~(2) of that Theorem.
In both cases,
we claim that for each $n\geq 1$, the quantity $F_n$ of
equation~\eqref{eq:newfactormon} is not a square modulo $\fp=\fp_2$.
Since all units of $\cO_K$ are squares modulo $\fp$,
Lemma~\ref{lem:newpmonic} will then guarantee that
condition~(3) of Theorem~\ref{thm:biggalois} also holds,
yielding the desired result.
Thus, it suffices to prove our claim: $F_n$ is not a square
modulo $\fp$, for every $n\geq 1$.

Write $D=s^{d-1}+t^{d-1}$, so that $b=s^d/(tD)$, and $v_{\fp}(D)=1$.
Let $\eta=(d-1)b/d$, which is the only critical point of $f$ besides $0$ and $\infty$.
Then for all $n\geq 1$, observe that
\begin{equation}
\label{eq:fnetamod}
(dtD)^{d^n} f^n(\eta) \equiv \Big( -(d-1)^{d-1} s^{d^2} \Big)^{d^{n-1}} \pmod{dtD}.
\end{equation}
Indeed, for $n=1$, equation~\eqref{eq:fnetamod} is immediate
from equation~\eqref{eq:f1eta}.
For $n\geq 2$, we have
\[ (dtD)^{d\ell} f\bigg( \frac{y}{(dtD)^{\ell}} \bigg) \equiv y^d \pmod{dtD}\]
for $y\in\cO_K$ relatively prime to $dtD$ and $\ell\geq 2$,
and hence equation~\eqref{eq:fnetamod} follows by induction on $n$.

Equation~\eqref{eq:fnetamod} shows that
$(dtD)^{d^n} f^n(\eta)$ is a nonzero square modulo $\fp$ for every $n\geq 1$.
(For $n=1$, recall that $1-d$ is a square modulo $\fp$.)
In addition, $(dtD)^{d^n} x_0\equiv 0 \pmod{\fp}$. Thus,
from the definition of $F_n$ in equation~\eqref{eq:newfactormon},
we see that $sF_n$ is a nonzero square modulo $\fp$.
Since $s$ is not a square modulo $\fp$, we have proven our claim
and hence the Theorem.
\end{proof}

\section{Proof of Odoni's Conjecture for $d$ odd, for most $K$}\label{section:odd}

We now prove Theorem~\ref{thm:maintheorem} for odd degree $d$:

\begin{theorem}
\label{thm:monicoddodoni}
Let $d\geq 3$ be an odd integer, and
let $K$ be a number field in which $d$ and $d-2$ are not both squares.
Then there is a monic polynomial $f(x)\in K[x]$ of degree $d$
and a rational point $x_0\in K$ such that for all $n\geq 0$,
\[ \Gal \Big( K\big( f^{-n}(x_0) \big) / K \Big) \cong [S_d]^n .\]
\end{theorem}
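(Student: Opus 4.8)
The plan is to mirror the structure of the proof of Theorem~\ref{thm:monicevenodoni}: produce a polynomial $f(x) = x^d - bx^m \in K[x]$ with $m = d-1$ (for $d \geq 3$ odd, $m = d-1$ is even, $\gcd(m,d)=1$, and $m > d/2$), together with a rational point $x_0 = s/t$ and $b = u/w$, satisfying the hypotheses of Theorem~\ref{thm:biggalois} (when $d \geq 5$) or Theorem~\ref{thm:biggalois23} (when $d = 3$). As before, conditions~(1) and~(2) are arranged by a Chinese Remainder Theorem construction, selecting $s$ and $t$ to control the $\fp_1$- and $\fp_2$-adic valuations of $x_0$ and $b$; the substance of the argument is condition~(3), which by Lemma~\ref{lem:newpmonic} reduces to showing that for every $n \geq 1$, the integer $F_n$ of equation~\eqref{eq:newfactormon} is not a square times a unit in $\cO_K$.

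I would control this via a single auxiliary prime $\fp$. As in Step~1 of Theorem~\ref{thm:monicevenodoni}, I want a prime $\fp \nmid d(d-1)$ modulo which all units of $\cO_K^\times$ are squares; the extra wrinkle in the odd case is that the analogue of equation~\eqref{eq:fnetamod} will force $(dtD)^{d^n} f^n(\eta)$ to be congruent modulo $\fp$ (up to squares and units that are squares) to a fixed power of some expression involving $d$, $d-1$, and $d-2$ — so I need to choose $\fp$ so that this fixed base is a \emph{nonsquare} modulo $\fp$, and then conclude $sF_n$ (or $uF_n$ for a suitable unit) is a nonsquare modulo $\fp$ while $s$ is a square, or vice versa. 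This is exactly where the hypothesis that $d$ and $d-2$ are not both squares in $K$ enters: it guarantees (via Chebotarev applied to the extension $K(\sqrt{u_1},\dots,\sqrt{u_r},\sqrt{d},\sqrt{d-2})/K$, which is strictly larger than $K(\sqrt{u_1},\dots,\sqrt{u_r})$ precisely because not both of $d, d-2$ become squares) that there is a prime $\fp$ modulo which all units are squares but at least one of $d$, $d-2$ is a nonsquare, and one can pin down the parity of $v_\fp$-contributions accordingly. I would set up the CRT choices of $s$ and $t$ so that $s$ has a prescribed quadratic character modulo $\fp$, matching whichever of $d, d-2$ turns out to be the nonsquare, so that the product $F_n$ lands on the nonsquare side for all $n$ simultaneously.

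The main obstacle is the computation of $f^n(\eta) \bmod \fp$ and the bookkeeping of which quadratic character each factor carries. Unlike the even case, where $f^n(\eta)$ collapsed cleanly to $\big(-(d-1)^{d-1} s^{d^2}\big)^{d^{n-1}}$ modulo $dtD$ — a perfect square up to the single factor $1-d$ handled in Step~1 — here the critical orbit produces a residue whose squareness depends nontrivially on $d$ and $d-2$ (the $d-2$ presumably arising because $\eta = (d-1)b/d$ feeds into $f(\eta) = \eta^{d-1}(\eta - b) = \eta^{d-1}\cdot(-b/d)$, and iterating introduces the relevant arithmetic). I would first do the $n=1$ computation explicitly to identify the correct base expression, then prove the general congruence by induction on $n$ as in equation~\eqref{eq:fnetamod}, using that $d$-th powers are trivial to track modulo a prime and that the parity of $d^{n-1}$ is always odd (since $d$ is odd) — so the nonsquare character propagates. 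Once the congruence and its quadratic character are established, applying Lemma~\ref{lem:newpmonic} and then Theorem~\ref{thm:biggalois} or~\ref{thm:biggalois23} finishes the argument exactly as in the even case.
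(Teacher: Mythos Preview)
Your proposal has a genuine gap: the choice $m=d-1$ cannot see the quantity $d-2$, so the case where $d$ is a square in $K$ but $d-2$ is not goes unhandled. Concretely, with $f(x)=x^d-bx^{d-1}$ and $b=s^d/(tD)$ (where $D=s^{d-1}+t^{d-1}$) exactly as in Lemma~\ref{lem:newpmonic}, one computes $(dtD)^d f(\eta) = -(d-1)^{d-1}s^{d^2}$, and by the same induction as in equation~\eqref{eq:fnetamod}, $(dtD)^{d^n}f^n(\eta)\equiv \big(-(d-1)^{d-1}s^{d^2}\big)^{d^{n-1}}\pmod{dtD}$. Since $d$ is odd, $d-1$ is even and $d^2,d^{n-1}$ are odd, so modulo any prime $\fp$ with all units square, this has the quadratic character of $-s$; hence $F_n$ has the character of $-1$, which is a square. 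If instead you reduce modulo a prime dividing $s$, the explicit formula $F_n=s^{de_n-1}(d-1)^{(d-1)^n}M_n-d^{d^n}(tD)^{d^n-1}$ gives $F_n\equiv -d^{d^n}(tD)^{d^n-1}$, whose character is $-d$. Either way the character is governed by $-1$ or $d$; the number $d-2$ never appears. Your ``presumably'' is doing work the computation does not support.

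The paper instead takes $m=d-2$ and $b=x_0^2=s^2/t^2$, so that $f(x)=x^d-bx^{d-2}$ is an odd function with two nonzero critical points $\pm\eta$ satisfying $\eta^2=(d-2)b/d$. The discriminant factor becomes $f^n(\eta)^2-x_0^2$, and one finds $(dt^2)^df(\eta)^2=4(d-2)^{d-2}s^{2d}$; this is where $d-2$ enters. A separate lemma (Lemma~\ref{lem:newpmonicodd}) replaces Lemma~\ref{lem:newpmonic}. The proof then splits into two cases: if $d$ is a nonsquare in $K$, one takes $\fp\mid s$ and gets $F_n\equiv -d^{d^n}t^{2d^n-2}\pmod\fp$, a nonsquare; if $d-2$ is a nonsquare, one takes $\fp\mid t$ and gets $s^2F_n\equiv\big(4(d-2)^{d-2}s^{2d}\big)^{d^{n-1}}\pmod\fp$, again a nonsquare. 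Your Chebotarev step also needs the refinement the paper uses (showing $K(\sqrt{d})\not\subseteq K(\sqrt{u_1},\dots,\sqrt{u_r})$ via ramification at a prime dividing $d$ to odd order), since a nonsquare $d\in K$ could in principle become a square after adjoining unit square roots.
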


Note that if $[K:\bQ]$ is odd, then Theorem~\ref{thm:monicoddodoni} yields
Odoni's conjecture in all odd degrees $d\geq 3$. After all, for $d\geq 3$ odd,
at least one of $d$ and $d-2$ is not a square in $\bQ$. Thus, 
any number field of odd degree over $\bQ$ cannot contain square roots
of both $d$ and $d-2$.

Before proving Theorem~\ref{thm:monicoddodoni}, we need one more lemma.

\begin{lemma}
\label{lem:newpmonicodd}
Let $d\geq 3$ be an odd integer, let $K$ be a number field with ring of integers $\cO_K$,
and let $s,t \in \cO_K$.
Suppose that $2(d-2)s$ and $dt$ are relatively prime.
Let $x_0=s/t$ and
\[ f(x) = x^d - b x^{d-2} \in\bQ[x], \quad\text{where }
b= x_0^2 = \frac{s^2}{t^2}. \]
Let $\pm\eta = \pm x_0 \sqrt{(d-2)/d}$ be the two nonzero critical points of $f$.
Then for every $n\geq 1$,
\begin{equation}
\label{eq:newfactormonicodd}
F_n:= s^{-2}\big( dt^2 \big)^{d^n} \big[ f^n(\eta)^2 - x_0^2 \big]
\end{equation}
is an $\cO_K$-integer relatively prime to $2d(d-2)st$.
Moreover, if $uF_n$ is not a square in $K$ for any unit $u\in\cO_K^{\times}$,
then there is a prime $\fq\nmid 2d(d-2)st$ of $\cO_K$ dividing
$\Delta(f^n(x)-x_0)$ to an odd power, and such that
$v_{\fq}\big(\Delta( f^{\ell}(x)-x_0)\big)=0$ for all $0\leq \ell < n$.
\end{lemma}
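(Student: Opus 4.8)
The plan is to mimic the proof of Lemma~\ref{lem:newpmonic} with $m=d-2$ (so that $d-m=2$), exploiting that $f(x)=x^d-bx^{d-2}$ is an \emph{odd} function when $d$ is odd: its two nonzero critical points $\pm\eta$ satisfy $\eta^2=(d-2)b/d$ and $f^n(-\eta)=-f^n(\eta)$. Because $\eta$ need not lie in $K$, the right quantity to propagate is $w_n:=f^n(\eta)^2\in K$ rather than $f^n(\eta)$ itself. Since $f(y)=y^{d-2}(y^2-b)$, squaring gives the recursion
\[ w_{n+1}=w_n^{d-2}(w_n-b)^2,\qquad w_1=f(\eta)^2=\frac{4(d-2)^{d-2}\,b^d}{d^d}=\frac{4(d-2)^{d-2}\,s^{2d}}{d^d t^{2d}}. \]
Clearing denominators by setting $W_n:=(dt^2)^{d^n}w_n$, the powers of $dt^2$ cancel and one obtains the $\cO_K$-polynomial recursion
\[ W_{n+1}=W_n^{d-2}\bigl(W_n-s^2 d^{d^n}t^{2(d^n-1)}\bigr)^2,\qquad W_1=4(d-2)^{d-2}s^{2d}, \]
so in particular $W_n\in\cO_K$.

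By induction on $n$ --- reducing this recursion modulo a prime $\fp$ and comparing $\fp$-adic valuations --- I would next show that $v_{\fp}(W_n)=0$ for every prime $\fp\mid dt$ and $v_{\fp}(W_n)\geq 2d\,v_{\fp}(s)$ for every prime $\fp\mid s$. The second bound gives $s^2\mid W_n$, hence $F_n=s^{-2}W_n-d^{d^n}t^{2(d^n-1)}$ is an $\cO_K$-integer. To see that $F_n$ is coprime to $2d(d-2)st$, I would check each prime $\fp$ dividing $2d(d-2)st$ in turn: if $\fp\mid dt$ then $d^{d^n}t^{2(d^n-1)}\equiv 0$ while $s^{-2}W_n\not\equiv 0$ modulo $\fp$; and if $\fp\mid 2(d-2)s$ then $s^{-2}W_n\equiv 0$ modulo $\fp$ --- for $\fp\mid 2(d-2)$ with $\fp\nmid s$ because $\fp\mid W_1$ forces $\fp\mid W_n$, and for $\fp\mid s$ because $v_{\fp}(W_n)-2v_{\fp}(s)\geq(2d-2)v_{\fp}(s)>0$ --- whereas $d^{d^n}t^{2(d^n-1)}$ is a nonzero residue since $\fp\nmid dt$. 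So $\fp\nmid F_n$ in every case.

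Now suppose $uF_n$ is not a square in $K$ for any unit $u\in\cO_K^{\times}$. Then there is a prime $\fq$ dividing $F_n$ to an odd power, and by the coprimality just established $\fq\nmid 2d(d-2)st$. I would feed this into the iterated discriminant formula~\eqref{eq:iterdisc}: here the leading coefficient of $f$ is $1$, so $\tilde A=\pm d^d$ and $\fq\nmid\tilde A$, and using $f^j(0)=0$ together with $f^j(-\eta)=-f^j(\eta)$ the critical-point factor for $\Delta(f^j(x)-x_0)$ is
\[ (f^j(0)-x_0)^{d-3}(f^j(\eta)-x_0)(f^j(-\eta)-x_0)=-x_0^{\,d-3}\bigl(w_j-x_0^2\bigr), \]
whose $\fq$-adic valuation equals $v_{\fq}(w_j-b)$ since $\fq\nmid x_0$ and $b=x_0^2$. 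Iterating~\eqref{eq:iterdisc} therefore yields $v_{\fq}\bigl(\Delta(f^\ell(x)-x_0)\bigr)=\sum_{j=1}^{\ell}d^{\ell-j}v_{\fq}(w_j-b)$, so it suffices to prove $v_{\fq}(w_\ell-b)=0$ for all $0\leq\ell<n$; then $v_{\fq}\bigl(\Delta(f^\ell(x)-x_0)\bigr)=0$ for $\ell<n$ while $v_{\fq}\bigl(\Delta(f^n(x)-x_0)\bigr)=v_{\fq}(F_n)$ is odd, as wanted.

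Finally, to prove $v_{\fq}(w_\ell-b)=0$ for $\ell<n$, suppose instead that $\fq\mid w_\ell-b$ for some such $\ell$. Then $w_{\ell+1}=w_\ell^{d-2}(w_\ell-b)^2$ forces $\fq\mid w_{\ell+1}$, and since $d-2\geq 1$ the recursion propagates this to $\fq\mid w_m$ for every $m>\ell$; in particular $\fq\mid w_n$. But $\fq\mid F_n$ and $\fq\nmid sdt$ give $\fq\mid w_n-x_0^2=w_n-b$, hence $\fq\mid b=s^2/t^2$; as $\fq\nmid t$, this forces $\fq\mid s$, contradicting $\fq\nmid 2d(d-2)st$. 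The one step I expect to require genuine care is the coprimality bookkeeping at primes dividing $2(d-2)$ --- in particular those dividing $s$ as well --- where one cannot simply declare both summands of $F_n$ generic and must instead use the $\fp$-adic slack built into the $W_n$-recursion; everything else is a faithful odd-degree translation of the proof of Lemma~\ref{lem:newpmonic}.
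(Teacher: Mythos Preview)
Your proposal is correct and follows essentially the same approach as the paper. The only packaging difference is that the paper extracts the explicit factorization $(dt^2)^{d^n}f^n(\eta)^2 = 4^{(d-2)^{n-1}}(d-2)^{(d-2)^n}s^{2e_n}M_n^2$ with $M_n$ coprime to $2d(d-2)st$, whereas you keep $W_n$ whole and track $\fp$-adic valuations; and for the final contradiction the paper argues directly with $f^\ell(\eta)\equiv x_0\pmod{\fq}\Rightarrow f^{\ell+1}(\eta)\equiv f(x_0)=0$, while you run the equivalent squared recursion $w_{\ell+1}=w_\ell^{d-2}(w_\ell-b)^2$.
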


\begin{proof}
We claim that for any $n\geq 1$,
\begin{equation}
\label{eq:inducfnodd}
(dt^2)^{d^n}f^n(\eta)^2 = 4^{(d-2)^{n-1}} (d-2)^{(d-2)^n}s^{2e_n} M_n^2
\end{equation}
where $M_n\in\cO_K$ is relatively prime to $2d(d-2)st$,
and where
\[ e_n=(d-2)^n + 2(d-2)^{n-1} + \cdots + 2(d-2) + 2 .\]

Observe that $f(\sqrt{x})^2 = x^{d-2}(x-b)^2$. Thus, we have
$f^n(\eta)^2 = g^n(\eta^2)$, where $g(x)=x^{d-2}(x-b)^2$.
Since $\eta^2=(d-2)x_0^2/d$, a direct computation shows
\begin{equation}
\label{eq:f1etaodd}
f(\eta)^2 = g(\eta^2) = \frac{\big((d-2)s^2\big)^{d-2}}{(dt^2)^{d-2}}
\bigg( \frac{(d-2)s^2}{dt^2} - \frac{s^2}{t^2} \bigg)
= \frac{ 4 (d-2)^{d-2} s^{2d} }{(dt^2)^d},
\end{equation}
proving the claim for $n=1$, with $M_1=1$.
Assuming equation~\eqref{eq:inducfnodd} holds
for a particular $n\geq 1$, we have
\[ \frac{(dt^2)^{d^{n+1}} f^{n+1}(\eta)}{4^{(d-2)^n} (d-2)^{(d-2)^{n+1}} s^{2e_{n+1}} M_n^{2(d-2)}}
= \Big( 4^{(d-2)^{n-1}} (d-2)^{(d-2)^n} s^{2e_n-2} M_n^2
- d^{d^n} t^{2(d^n-1)} \Big)^2 , \]
since $e_{n+1} = (d-2)e_n + 2$.
Setting
\[ M_{n+1} := M_n^{d-2}\Big( 4^{(d-2)^{n-1}} (d-2)^{(d-2)^n} s^{2e_n-2} M_n^2
- d^{d^n} t^{2(d^n-1)} \Big) \in \cO_K, \]
we see that $M_{n+1}$ is relatively prime to $2std(d-2)$, proving the claim.

Fix $n\geq 1$. It is immediate from equations~\eqref{eq:newfactormonicodd}
and~\eqref{eq:inducfnodd} that
\[ F_n = 4^{(d-2)^{n-1}} (d-2)^{(d-2)^n} s^{2e_n-2} M_n^2
- d^{d^n} t^{2d^n-2} \in\cO_K, \]
which is relatively prime to $2d(d-2)st$, as desired.

For the remainder of the proof, assume that $uF_n$ is not a square in $\cO_K$
for any unit $u\in\cO_K^{\times}$.
Then there is a prime $\fq$ of $\cO_K$ dividing $F_n$ to an odd power.
We must have $\fq \nmid 2d(d-2)st$.

The factor consisting of
the product over critical points in discriminant formula~\eqref{eq:iterdisc}
for $\Delta(f^n(x)-x_0)$ is
\[ -x_0^{d-3} (f^n(\eta)^2 - x_0^2) =
\frac{-s^{d-1} F_n}{d^{d^n} t^{2d^n + d -3}} . \]
Thus, $\fq$ divides this factor to an odd power.
By formula~\eqref{eq:iterdisc}, then, it suffices to show that 
\begin{equation}
\label{eq:nopDeltamonodd}
v_{\fq}\Big( \Delta\big( f^{\ell}(x) - c \big) \Big) = 0
\quad\text{for all} \quad 0\leq \ell \leq n-1 .
\end{equation}
Suppose not. Let $0\leq \ell \leq n-1$ be the smallest index for which
equation~\eqref{eq:nopDeltamonodd} fails. Then by formula~\eqref{eq:iterdisc} again,
we must have
\[ v_{\fq} \big( f^{\ell}(\eta)^2 - x_0^2 \big) \geq 1,
\quad\text{i.e.,} \quad
f^{\ell}(\eta) \equiv x_0  \text{ or } f^{\ell}(-\eta) \equiv x_0  \pmod{\fq} . \]
Without loss, assume $f^{\ell}(\eta) \equiv x_0  \pmod{\fq}$.
Then $f^{\ell + j}(\eta) \equiv 0 \pmod{\fq}$ for all $j\geq 1$.
In particular, $f^n(\eta) \equiv 0 \pmod{\fq}$.
However,
\[ f^n(\eta)\equiv x_0 \not \equiv 0 \pmod{\fq}. \]
This contradiction proves equation~\eqref{eq:nopDeltamonodd} and hence the Lemma.
\end{proof}

\begin{proof}[Proof of Theorem~\ref{thm:monicoddodoni}]
Fix $d\geq 3$ odd.
It suffices to show that there is some $f(x)\in K[z]$ satisfying the hypotheses
of Theorem~\ref{thm:biggalois} or~\ref{thm:biggalois23}, with $\deg(f)=d$.

\textbf{Case 1: $d$ is not a square in $K$; Step 1}.
We first show that there is a prime $\fp$ of $\cO_K$
such that all units $u\in\cO_K^{\times}$ are squares modulo $\fp$,
with $\fp\nmid 2d(d-2)$,
and such that $d$ is not a square modulo $\fp$,
by adjusting the method of Step~1 of the proof of Theorem~\ref{thm:monicevenodoni}
with inspiration from well-known argument of Hall \cite{Hall}.

Let $\fq$ be a prime of $\cO_K$ dividing $d$ to an odd power;
note that $\fq$ lies above an odd prime of $\ZZ$.
Let $u_1,\ldots,u_r$ be generators of the 
(finitely-generated) unit group $\cO_K^{\times}$,
and let $L=K(\sqrt{u_1},\ldots,\sqrt{u_r})$. The discriminant $\Delta(L/K)$
must divide a power of $2$, and hence $\fq$ cannot
ramify in $L$. However, $\fq$ ramifies in $K(\sqrt{d})$, and therefore
$K(\sqrt{d})\cap L = K$. It follows that $L(\sqrt{d})/L$ is a quadratic extension.

Therefore, there exists
$\sigma\in\Gal(L(\sqrt{d})/L)\subseteq\Gal(L(\sqrt{d})/K)$
that fixes $\sqrt{u_1},\ldots,\sqrt{u_r}$ but has $\sigma(\sqrt{d})=-\sqrt{d}$.
By the Chebotarev Density Theorem, there are infinitely many primes $\fp$ of $\cO_K$
at which Frobenius acts trivially on $L$ modulo $\fp$, but nontrivially on $\sqrt{d}$.
Choosing any such prime $\fp_1$
that does not ramify in $L$ and does not divide $2d(d-2)$,
it follows that all units of $\cO_K$ are squares modulo $\fp_1$, but $d$ is not.

\textbf{Case 1, Step~2}.
Let $\fp_1$ be the prime of $\cO_K$ found in Step~1,
and choose $s\in\cO_K$ relatively prime to $d$ such that $v_{\fp_1}(s)=1$.
Let $\fp_2$ be a prime of $\cO_K$ not dividing $2d(d-2)s$,
and choose $t\in\cO_K$ relatively prime to $2(d-2)s$ such that $v_{\fp_2}(t)=1$.
Let $x_0=s/t$ and $b=x_0^2$, so that
\begin{equation}
\label{eq:bx0valsodd}
v_{\fp_1}(x_0)=1, \quad v_{\fp_1}(b) = 2, \quad
v_{\fp_2}(x_0) = -1, \quad\text{and}\quad v_{\fp_2}(b) = -2.
\end{equation}
Define $f\in K[x]$ by $f(x) = x^d - bx^{d-1}$.

\textbf{Case 1, Step 3}.
Let $m=d-2$. If $d=3$, then $f$ is of the form of Theorem~\ref{thm:biggalois23},
and the relations~\eqref{eq:bx0valsodd}
show that $f$ satisfies condition~(1) of Theorem~\ref{thm:biggalois}.
Similarly, if $d\geq 5$, then $f$ is of the form of Theorem~\ref{thm:biggalois},
and relations~\eqref{eq:bx0valsodd}
show that $f$ satisfies conditions~(1) and~(2) of that Theorem.
In both cases,
we claim that for each $n\geq 1$, the quantity $F_n$ of
equation~\eqref{eq:newfactormonicodd} is not a square modulo $\fp=\fp_1$.
Since all units of $\cO_K$ are squares modulo $\fp$,
Lemma~\ref{lem:newpmonicodd} will then guarantee that
condition~(3) of Theorem~\ref{thm:biggalois} also holds,
yielding the desired result.
Thus, it suffices to prove our claim: $F_n$ is not a square
modulo $\fp$, for every $n\geq 1$.

Let $\eta=x_0\sqrt{(d-2)/d}$, so that $\pm\eta$
are the only two critical points of $f$ besides $0$ and $\infty$.
Then for all $n\geq 1$, observe that
\begin{equation}
\label{eq:fnetamod1}
s^{-2} f^n(\eta)^2 \equiv 0 \pmod{s},
\end{equation}
since equation~\eqref{eq:inducfnodd} shows $f^n(\eta)^2$ is divisible
by a higher power of $s$.

Combining the definition of $F_n$ in equation~\eqref{eq:newfactormonicodd}
and equation~\eqref{eq:fnetamod1}, we have
\[ F_n \equiv -d^{d^n} t^{2d^n - 2} \pmod{\fp}, \]
since $\fp=\fp_1 | s$.
Since $d$ is odd and not a square modulo $\fp$,
we have proven our claim
and hence Case~1 of the Theorem.

\textbf{Case 2: $d-2$ is not a square in $K$; Step 1}.
By the same argument as in Case~1, Step~1, but this
time with the odd number $d-2$ in place of $d$,
there is a prime $\fp$ of $\cO_K$
such that all units $u\in\cO_K^{\times}$ are squares modulo $\fp$,
with $\fp\nmid 2d(d-2)$,
and such that $d-2$ is not a square modulo $\fp$.

\textbf{Case 2, Step~2}.
Let $\fp_2$ be the prime $\fp$ of $\cO_K$ found in Step~1,
and choose $t\in\cO_K$ relatively prime to $2(d-2)$ such that $v_{\fp_2}(t)=1$.
Let $\fp_1$ be a prime of $\cO_K$ not dividing $2d(d-2)t$,
and choose $s\in\cO_K$ relatively prime to $dt$ such that $v_{\fp_1}(s)=1$.
Let $x_0=s/t$ and $b=x_0^2$, and define
$f\in K[x]$ by $f(x) = x^d - bx^{d-2}$.

\textbf{Case 2, Step 3}.
Since $d-2$ is not a square, we have $d\geq 5$.
As in Case~1, relations~\eqref{eq:bx0valsodd} hold,
and they
show that $f$ satisfies conditions~(1) and~(2) of Theorem~\ref{thm:biggalois},
with $m=d-2$, and hence $d-m=2$.
Also as in Case~1, it then suffices to prove the following claim:
that for each $n\geq 1$,
the quantity $F_n$ is not a square modulo $\fp=\fp_2$.

As before, let $\eta=x_0\sqrt{(d-2)/d}$.
Then for all $n\geq 1$, observe that
\begin{equation}
\label{eq:fnetamod2}
(dt^2)^{d^n} f^n(\eta)^2 \equiv \Big( 4(d-2)^{d-2} s^{2d} \Big)^{d^{n-1}} \pmod{dt^2}.
\end{equation}
Indeed, for $n=1$, equation~\eqref{eq:fnetamod2} is immediate
from equation~\eqref{eq:f1etaodd}.
For $n\geq 2$, we have
\[ (dt^2)^{d\ell} f\bigg( \frac{\sqrt{y}}{(dt^2)^{\ell/2}} \bigg)^2 \equiv y^d \pmod{dt^2}\]
for $y\in\cO_K$ relatively prime to $dt$ and for $\ell\geq 2$.
Hence, equation~\eqref{eq:fnetamod2} follows by induction on $n$.

Since $d$ is odd and $d-2$ is not a square modulo $\fp$,
equation~\eqref{eq:fnetamod2} shows that the quantity
$s^{-2}(dt^2)^{d^n} f^n(\eta)^2$ is a nonsquare modulo $\fp$ for every $n\geq 1$.
In addition, $(dt^2)^{d^n} x_0\equiv 0 \pmod{\fp}$, since $\fp=\fp_2 | t$.
Thus, from the definition of $F_n$ in equation~\eqref{eq:newfactormonicodd},
we have proven our claim and hence the Theorem.
\end{proof}

\textbf{Acknowledgments}: The first author gratefully acknowledges the support of NSF grant DMS-1501766.

\bibliographystyle{amsalpha}
\bibliography{biblio}

\providecommand{\bysame}{\leavevmode\hbox to3em{\hrulefill}\thinspace}
\providecommand{\MR}{\relax\ifhmode\unskip\space\fi MR }
\providecommand{\MRhref}[2]{%
  \href{http://www.ams.org/mathscinet-getitem?mr=#1}{#2}
}
\providecommand{\href}[2]{#2}
\begin{thebibliography}{AHM05}

\bibitem[AHM05]{AHM}
Wayne Aitken, Farshid Hajir, and Christian Maire, \emph{Finitely ramified
  iterated extensions}, Int. Math. Res. Not. (2005), no.~14, 855--880.
  \MR{2146860}

\bibitem[BJ09]{BJ}
N.~Boston and R.~Jones, \emph{The image of an arboreal {G}alois
  representation}, Pure Appl. Math. Q. \textbf{5} (2009), no.~1, 213--225.

\bibitem[Hal33]{Hall}
Marshall Hall, \emph{Quadratic reciprocity in factorizations}, Bull. Amer.
  Math. Soc. (1933), no.~39, 758--763.

\bibitem[Jan96]{Jan}
G.~J. Janusz, \emph{Algebraic number fields}, second ed., Graduate Studies in
  Mathematics, vol.~7, American Mathematical Society, Providence, RI, 1996.

\bibitem[Jon13]{Jones3}
R.~Jones, \emph{{Galois representations from pre-image trees: an arboreal
  survey}}, Publ. Math. Besan\c{c}on (2013), 107--136.

\bibitem[Juu17]{JO}
J.~Juul, \emph{Iterates of generic polynomials and generic rational functions},
  Available at {\tt arXiv:1410.3814}, 26 pages, 2017.

\bibitem[Kad18]{Kadets}
Borys Kadets, \emph{Large arboreal {G}alois representations}, Available at {\tt
  arXiv:1802.09074}, 12 pages, 2018.

\bibitem[Loo16]{Looper}
N.R. Looper, \emph{Dynamical {G}alois groups of trinomials and {O}doni's
  conjecture}, Available at {\tt arXiv:1609.03398}, 14 pages, 2016.

\bibitem[Odo85a]{odoni}
R.~W.~K. Odoni, \emph{The {G}alois theory of iterates and composites of
  polynomials}, Proc. London Math. Soc. (3) \textbf{51} (1985), no.~3,
  385--414.

\bibitem[Odo85b]{odoni2}
\bysame, \emph{On the prime divisors of the sequence {$w\sb {n+1}=1+w\sb
  1\cdots w\sb n$}}, J. London Math. Soc. (2) \textbf{32} (1985), no.~1, 1--11.

\bibitem[Spe18]{Specter}
Joel Specter, \emph{Polynomials with surjective arboreal {G}alois
  representations exist in every degree}, Available at {\tt arXiv:1803.00434},
  16 pages, 2018.

\bibitem[Sto92]{Stoll}
M.~Stoll, \emph{Galois groups over {${\bf Q}$} of some iterated polynomials},
  Arch. Math. (Basel) \textbf{59} (1992), no.~3, 239--244.

\end{thebibliography}

\end{document}